\documentclass[a4paper,12pt]{amsart}
\usepackage{graphicx} 

\usepackage[T1]{fontenc}
\usepackage[utf8]{inputenc}
\usepackage[english]{babel}
\usepackage{amsmath, amssymb, amsthm, amsfonts}
\usepackage{mathrsfs}
\numberwithin{equation}{section}
\usepackage{bm}
\usepackage{tikz}
\usetikzlibrary{arrows}
\usetikzlibrary{matrix}
\usepackage[left=2.5cm, right=2.5cm, top=2cm]{geometry}
\usepackage{enumerate}
\usepackage{hyperref}
\usepackage{comment}
\date{}
\theoremstyle{plain}
\newtheorem{thm}{Theorem}

\newtheorem{rema}[thm]{Remark}
\newtheorem{prop}[thm]{Proposition}

\newtheorem{lem}[thm]{Lemma}
\newtheorem{cor}[thm]{Corollary}

\numberwithin{thm}{section}
\numberwithin{figure}{section}
\title{ Perspectivity and the perspective-Schr\"oder-Bernstein Property}
\author{Dinesh Khurana}
\author{Shubham Mittal}
\thanks{The work of the second author is supported by a UGC grant and will form a part of his Ph.D. dissertation under the supervision of the first author.}

\keywords{Directly finite module, PSB-property, quasi-continuous module, SB-property, transitivity of perspectivity, weakly perspectivity.}
\subjclass[2020]{Primary 16D70 ; Secondary 16D80, 16E50, 16U99}

\begin{document}
	
	\begin{abstract}
	 In this paper, we study various aspects of perspectivity in modules. We prove that the following classes of modules satisfy the perspective-Schr\"oder-Bernstein property: modules with transitive perspectivity, quasi-continuous modules, Harada (and hence discrete) modules and quasi-discrete modules with the (finite) exchange property. Furthermore, we prove that for a semiregular ring, the Schr\"oder-Bernstein property implies the perspective-Schr\"oder-Bernstein property. We prove that for $A,B \subseteq ^{\oplus} M$, if all complements of $A$ are perspective with $B$, then all complements of $B$ are perspective with $A$.
     We also provide new characterizations of weakly perspective modules and modules in which perspectivity is transitive. Some applications of these results are given. We finally prove that perspectivity is transitive in a ring $R$ if and only if every special clean element of $R$ is perspective.
\end{abstract}
\maketitle
\section{Introduction}
\vspace{0.25cm}

The classical Schr\"oder-Bernstein theorem asserts that if two sets $A$ and $B$ are each bijective with a subset of the other, then $A$ and $B$ are bijective. Inspired by this, several mathematicians have studied the conditions under which two modules that are isomorphic to submodules of each other, are isomorphic. For instance, Bumby \cite{B} proved that this is true if both modules are injective.  M\"uller and Rizvi \cite[Example 1, Page 207]{MR} gave an example showing that the above result fails for quasi-continuous modules.

\medskip
On the other hand, Kaplansky \cite[Page 12]{K} raised the following question, known as the Kaplansky's First Test Problem.

\medskip
\textit{If two abelian groups are isomorphic to summands of each other, then are they isomorphic?}

\medskip
Motivated by this, the notion of the Schr\"oder-Bernstein property for modules was introduced in \cite{DER}. A module $M$ is said to satisfy the \textit{Schr\"oder-Bernstein (SB, for short) property} if for any two summands $A$ and $B$ of $M$ such that $A$ is isomorphic to a summand of $B$ and $B$ is isomorphic to a summand of $A$, then $A$ is isomorphic to $B$. In the same paper \cite[Example 2.3]{DER}, the authors gave an example of a von Neumann regular ring $R$, such that $R_R$ does not have the Schr\"oder-Bernstein property.

\medskip 
Two summands are called \textit{perspective} if they have a common complement. It is clear that two perspective submodules are isomorphic, but the converse may not be true. Recently, Keef and Ko\c{s}an in \cite{KK} defined a module $M$ to satisfy the \textit{perspective-Schr\"oder-Bernstein (PSB, for short) property} if any two summands $A$ and $B$ of $M$, such that $A$ is perspective to a summand of $B$ and $B$ is perspective to a summand of $A$, are perspective. In \cite[Theorem 6(3)]{DER}, it was proved that for any ring $R$, $R_R$ satisfies PSB iff $_RR$ satisfies PSB. In \cite{KK}, it was also proved that directly finite modules and semisimple modules satisfy the PSB-property. Extending these results in Section 2, we prove that the PSB-property holds for modules with transitive perspectivity, quasi-continuous modules, Harada modules (in particular, discrete modules) and quasi-discrete modules with the (finite) exchange property.

\medskip
In \cite{KK}, the authors also asked whether the class of modules satisfying the SB-property is distinct from the class of modules satisfying the PSB-property. We prove that in a nonsingular $(C_3)$ module, SB-property implies PSB-property. It is further shown that in a semiregular ring $R$, SB-property implies PSB-property in $R_R$.  
 
\medskip
 In \cite{M}, Mary defined an element $a \in R$ to be \textit{right perspective} if it is regular and every complement of $r_R(a)$ is perspective with $aR$. One of the main results of that paper was the left-right symmetry of this notion. In Section 3, we prove that for $A,B\subseteq^{\oplus}M$, if all complements of $A$ are perspective with $B$, then all complements of $B$ are perspective with $A$. This leads to a quick proof of Mary's \cite[Theorem 3.4]{M}. 

\medskip
For two summands $A$ and $B$ of a module $M$, by $A \sim_pB$, we mean that $A$ is perspective to $B$. In \cite{GGK}, a module $M$ is said to be \textit{perspective} if any two isomorphic summands of $M$ are perspective. In \cite[Theorem 3.4]{GGK}, several characterizations of a perspective module are given. Generalizing perspective module, Amini et al. \cite{AAM} defined a module $M$ to be \textit{weakly perspective} if any two isomorphic summands with isomorphic complements have a common complement. On the other hand, \textit{Perspectivity is transitive} in $M$ if for summands $A, B$ and $C$ of $M$, $A\sim_p B \sim_p C$ implies $A\sim_p C$. We give several characterizations of weakly perspective modules and modules in which perspectivity is transitive on the similar lines as that of \cite[Theorem 3.4]{GGK}. As an application, we give a short proof of the result \cite[Theorem 2.5]{KP1} that if perspectivity is transitive in $\,{\mathbb M}_2\bigl(R\bigr)\,$, then $R$ has stable range one. For a brief history of this problem, we refer the reader to \cite{KP1}. 

\medskip
An element $a \in R$ is said to be \textit{special clean} if $a = e +u$ for some idempotent $e$ and unit $u$ in $R$ such that $aR \cap eR=0$. In \cite[Theorem 4.1]{KP2}, it was proved that $a \in R$ is special clean if and only if some complement of $aR$ is perspective with $r_R(a)$ (which is equivalent to saying that some complement of $r_R(a)$ is perspective with $aR$). For more characterizations of a special clean element, see \cite[Lemma 2.2]{M}.  We end this paper by proving that perspectivity is transitive in a ring $R$ iff special clean elements of $R$ are perspective.

\medskip
Throughout this paper, all rings are associative with unity and modules over it are unital. The group of units and Jacobson radical are denoted by ${\rm U}(R)$ and ${\rm J}(R)$, respectively. We use the notation $A \subseteq ^{\oplus} M$ to indicate that $A$ is a summand of $M$ and $A \leq^{e} M$ denotes that $A$ is an essential submodule of a module $M$. For an element $a \in R$, $r_R(a)$ (respectively, $l_R(a)$) will denote the right (respectively, left) annihilator of $a$ in $R$. For further notations and terminology, we will follow \cite{MM}.

\vspace{0.25cm}
\section{perspective-Schr\"oder-Bernstein property}
\vspace{0.25cm}
Recall that a module $M$ is said to have \textit{perspective-Schr\"oder-Bernstein (PSB, for short) property} if for any two summands $A$ and $B$ of $M$, $A \sim_p B'\subseteq^{\oplus}B$ and $B\sim_p A'\subseteq^{\oplus}A$ implies $A \sim_p B$. A ring $R$ has PSB-property when $R_R$ (equivalently, $_RR$) does. 

\medskip
\begin{thm}
     A module in which perspectivity is transitive has the PSB-property.
\end{thm}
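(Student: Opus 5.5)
The proof reduces to one observation: a summand can be perspective to a submodule of itself only if the two are equal. Transitivity will produce exactly this situation, and it collapses all the complementary summands to $0$.

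Suppose $A \sim_p B' \subseteq^{\oplus} B$ and $B \sim_p A' \subseteq^{\oplus} A$. Write $B = B' \oplus B''$ and $A = A' \oplus A''$. I will show that $A'' = 0$ and $B'' = 0$, so that $A' = A$, $B' = B$, and hence $A \sim_p B$ directly.

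\emph{Step 1: transport a decomposition across a perspectivity.} Suppose $X \sim_p Y$ with common complement $C$, so $M = X \oplus C = Y \oplus C$. Let $\pi\colon X \to Y$ be the restriction to $X$ of the projection of $M$ onto $Y$ along $C$. It is an isomorphism, and $x - \pi(x) \in C$ for all $x\in X$.

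Suppose also $X = X_1 \oplus X_2$, and put $Y_i = \pi(X_i)$, so $Y = Y_1 \oplus Y_2$. Since $Y_1 + C = X_1 + C$, we get
\[
Y_1 + X_2 + C = X_1 + X_2 + C = M .
\]
The sum $Y_1 + X_2 + C$ is direct. To see this, suppose $y_1 + x_2 + c = 0$ and write $y_1 = \pi(x_1)$. Then $x_1 + x_2 + c' = 0$ for some $c' \in C$, which forces $x_1 = 0$ and $x_2 = 0$, hence $y_1 = 0$ and $c = 0$. Therefore $X_1 \sim_p Y_1$, with common complement $X_2 \oplus C$.

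\emph{Step 2: a summand perspective to its own subsummand.} Suppose $X \sim_p Y$ with $Y \subseteq X$, and let $C$ be a common complement. By the modular law,
\[
X = X \cap (Y \oplus C) = Y \oplus (X \cap C) = Y ,
\]
since $X \cap C = 0$.

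\emph{Step 3: combine, using transitivity once.} Apply Step 1 to $B \sim_p A'$ with $X = B = B' \oplus B''$ and $Y = A'$. This gives $A' = P' \oplus P''$ with $B' \sim_p P'$, and $P'' \cong B''$ because $P'' = \pi(B'')$. Now $A \sim_p B' \sim_p P'$, so transitivity of perspectivity gives $A \sim_p P'$. Moreover $P' \subseteq A' \subseteq A$.

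By Step 2, $P' = A$. Since $A = P' \oplus P'' \oplus A''$, this forces $P'' = 0$ and $A'' = 0$. Hence $B'' \cong P'' = 0$, so $B' = B$. The hypothesis $A \sim_p B'$ is then exactly $A \sim_p B$, as required.

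The only delicate point is the directness check in Step 1, that $X_1 \sim_p Y_1$ via the complement $X_2 \oplus C$; it is a routine element chase using the projection along $C$. Everything else is formal.
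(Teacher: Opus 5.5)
Your proof is correct, but it runs transitivity on a different chain than the paper does. Both arguments use transitivity exactly once and then finish with the modular-law fact of your Step 2 (a summand perspective to a summand nested in it must equal it). The paper uses that fact without comment in its last line.

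The paper works on the complement side. From $M=A\oplus C=B'\oplus C=B\oplus D=A'\oplus D$ and $D=(A\cap D)\oplus D'$, it reads off $D'\sim_p C$ (common complement $A$) and $C\sim_p (B\cap C)\oplus D$ (common complement $B'$). Transitivity then gives $D'\sim_p (B\cap C)\oplus(A\cap D)\oplus D'$, and nesting kills $B\cap C$ and $A\cap D$ at once.

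You stay on the summand side. So to get the third term of your chain you need the transport lemma of Step 1, which produces $P'\subseteq A'$ with $B'\sim_p P'$. At the end you also need the isomorphism $P''\cong B''$ to pass from $P''=0$ to $B''=0$.

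The two chains are closely related. Take $B''=B\cap C$ and consider the sequence $D', A, C, B', (B\cap C)\oplus D, P'$, in which any two consecutive terms are complements in $M$. The paper applies transitivity to the odd-numbered terms, with the even-numbered ones serving as common complements. You apply it to the even-numbered terms $A, B', P'$, with $C$ and $(B\cap C)\oplus D$ as the common complements.

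The paper's route needs no maps or element chase, since every perspectivity it uses is visible from a direct-sum decomposition, and it removes both extra pieces simultaneously. Yours isolates a reusable lemma: a decomposition of one side of a perspectivity transports, via the projection along the common complement, to a decomposition of the other side whose corresponding pieces are perspective.
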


\noindent{\bf Proof.} Let M be a module in which perspectivity is transitive and let A and B be two summands of M such that $A \sim_p B' \subseteq^{\oplus} B$ and $B \sim_p A' \subseteq^{\oplus} A$. Then $M=A \oplus C=B' \oplus C=B \oplus D= A' \oplus D$, for some summands $C$ and $D$ of $M$. This implies that $A= A' \oplus (A\cap D)$ and $B= B' \oplus (B\cap C)$. Since $A\cap D \subseteq ^{\oplus} D$, write $D=(A\cap D) \oplus D'$ for some summand $D'$ of $D$. Then $$M=A' \oplus D=A' \oplus (A \cap D)\oplus D'=A\oplus D' \quad \text{and} \quad M=B\oplus D=B'\oplus (B\cap C)\oplus D.$$
So $D' \sim_p C$ and $C \sim_p (B \cap C)\oplus D$. By transitivity, $D' \sim_p (B \cap C)\oplus D= (B \cap C)\oplus (A \cap D)\oplus D'$. This implies $B \cap C = (0)= A \cap D$. Therefore, $A=A'$ and $B=B'$.  \qed

 
\bigskip 
We will frequently use the following known result.

\medskip 
\begin{lem} \textup{(\cite[Page 2]{H} and \cite[Lemma 4.2]{KP2})}
For submodules $A$ and $B$ of a module $M$, if $A\cong B$ and $A \cap B=(0)$, then $A \sim_p B$ in $A \oplus B$. Further, if $A\oplus B \subseteq ^{\oplus} M$, then $A\sim_p B$ in $M$.
\end{lem}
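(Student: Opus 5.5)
The plan is to build a common complement of $A$ and $B$ inside $A\oplus B$ by taking the graph of an isomorphism, and then to extend to $M$ using the hypothesis that $A\oplus B$ is a summand.

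\textbf{Step 1: the common complement.} Fix an isomorphism $f\colon A\to B$. Set
\[
C=\{a-f(a) : a\in A\}\subseteq A\oplus B,
\]
which makes sense since $A\cap B=(0)$.

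\textbf{Step 2: $C$ complements $A$.} For the intersection, suppose $a-f(a)\in A$. Then $f(a)\in A\cap B=(0)$, so $f(a)=0$ and, since $f$ is injective, $a=0$; hence $A\cap C=(0)$. For the sum, take any $a+b\in A\oplus B$ and write $b=f(a_1)$ with $a_1\in A$, using surjectivity of $f$. Then
\[
a+b=(a+a_1)-\bigl(a_1-f(a_1)\bigr)\in A+C .
\]
Thus $A\oplus C=A\oplus B$.

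\textbf{Step 3: $C$ complements $B$.} If $a-f(a)\in B$, then $a\in B$, so $a\in A\cap B=(0)$ and $a=0$; hence $B\cap C=(0)$. For the sum,
\[
a+b=\bigl(a-f(a)\bigr)+\bigl(f(a)+b\bigr)\in C+B .
\]
Thus $B\oplus C=A\oplus B$, and so $A\sim_p B$ in $A\oplus B$ with common complement $C$.

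\textbf{Step 4: passing to $M$.} Write $M=(A\oplus B)\oplus N$. By Steps 2 and 3,
\[
M=A\oplus(C\oplus N)=B\oplus(C\oplus N),
\]
so $C\oplus N$ is a common complement of $A$ and $B$ in $M$. This gives $A\sim_p B$ in $M$.

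The argument is entirely routine. The only point needing care is surjectivity in Steps 2 and 3, and in particular in Step 2, where one must use that $f$ is onto $B$ in order to express an arbitrary $b\in B$ as $f(a_1)$.
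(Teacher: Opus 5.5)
Your proof is correct. It is the standard graph-of-an-isomorphism argument behind this lemma: the paper gives no proof of its own and simply cites the sources, and your common complement $\{a-f(a) : a\in A\}$, enlarged by a complement of $A\oplus B$ in $M$, is exactly that construction. One small precision: bijectivity of $f$ is used only in Step 2, while Step 3 needs nothing beyond $A\cap B=(0)$.
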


\medskip
Consider the following properties for an $R$-module $M$: 

\vspace{0.15cm}
\noindent $(C_1)$ Every submodule $N$ of $M$ is essential in a summand of $M.$

\noindent $(C_2)$ If a submodule $N$ is isomorphic to a summand of $M$, then $N$ is a summand of $M.$

\noindent $(C_3)$ If $M_1$ and $M_2$ are summands of $M$ such that $M_1 \cap M_2 =(0)$, then $M_1 \oplus M_2$ is a summand of $M$. 

\medskip
A module $M$ is called \textit{continuous} (respectively, \textit{quasi-continuous}) if it satisfies $(C_1)$ and $(C_2)$ (respectively, $(C_1)$ and $(C_3)$) and is called \textit{extending} (or $CS$) if it has $(C_1)$. It is well known that every injective module is continuous and that a continuous module is quasi-continuous.

\medskip
\begin{prop}
     Let $M$ be a nonsingular\footnote{A module $M_R$ is said to be \textit{nonsingular} if no nonzero element of $M$ is annihilated by an essential right ideal of $R$.} module with $(C_3)$ property. If $M$ satisfies the SB-property, then it also satisfies the PSB-property. In particular, if a regular ring has the SB-property, then it also has the PSB-property.
 \end{prop}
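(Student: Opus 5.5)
The plan is to reduce the statement to a cancellation-type argument. Let $A\sim_p B'\subseteq^{\oplus}B$ and $B\sim_p A'\subseteq^{\oplus}A$. As in the proof of Theorem 2.1, write
$$M=A\oplus C=B'\oplus C=B\oplus D=A'\oplus D,$$
so that $A=A'\oplus(A\cap D)$ and $B=B'\oplus(B\cap C)$. Write also $D=(A\cap D)\oplus D'$, so that $M=A\oplus D'$.

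\textbf{Step 1: isomorphic summands and complements.} Since perspective summands are isomorphic, $A\cong B'\subseteq^{\oplus}B$ and $B\cong A'\subseteq^{\oplus}A$, so the SB-property gives $A\cong B$. On the complement side we have
$$D'\cong M/A\cong C \quad\text{and}\quad C\cong M/B'\cong (B\cap C)\oplus D.$$
Similarly
$$D\cong (A\cap D)\oplus D'\cong (A\cap D)\oplus C.$$
So each of $C,D$ is isomorphic to a summand of the other, and the SB-property applied to these summands of $M$ gives $C\cong D$. Thus $A\cong B$, and they have isomorphic complements $C$ and $D$.

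\textbf{Step 2: the aim.} It would suffice to show that a nonsingular $(C_3)$ module is weakly perspective. A more economical route, as in Theorem 2.1, is to show directly that $A\cap D=0$ and $B\cap C=0$, since then $A=A'$ and $B=B'$.

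\textbf{Step 3: use $(C_3)$ and nonsingularity.} Put $X=B\cap C$, a summand of $M$ contained in $C$. From $C\cong X\oplus D$ and $D\cong (A\cap D)\oplus C$ we get
$$C\cong X\oplus(A\cap D)\oplus C.$$
I would try to exploit this via Lemma 2.2. The first task is to produce, inside $C$, a copy $X_1$ of $X$ with $X_1\cap X=0$. Then $(C_3)$ should make $X\oplus X_1$ a summand of $M$, and Lemma 2.2 gives $X\sim_p X_1$. The idea is to iterate this to produce summands that contradict $A\sim_p B'$ unless $X=0$.

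Nonsingularity is what makes the intersections controllable. In a nonsingular module, closed submodules have unique closures, and the kernel of a map from a nonsingular module is closed. I would use this to show that the copy of $X$ obtained by transporting along the isomorphism $C\to X\oplus D$ meets $X$ trivially up to a closed summand that can be split off. Concretely, I would take $N=X\cap X_1$, show that it is closed in $X_1$ (and hence a summand by $(C_3)$-type arguments), and cancel it.

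\textbf{Main obstacle.} The hard step is Step 3: extracting genuine disjointness from the abstract isomorphism $C\cong X\oplus(A\cap D)\oplus C$. Isomorphisms of the form $Y\cong Z\oplus Y$ do not force $Z=0$ in general. So the argument must use the specific embeddings given by the decompositions of $M$, not just the isomorphism types, together with nonsingularity, so that the relevant submodules are closed and $(C_3)$ applies.

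If this direct route fails, I would fall back on the other reduction of Step 2. That is, I would prove that in a nonsingular $(C_3)$ module, isomorphic summands with isomorphic complements are perspective, which together with Step 1 finishes the proof. The method would be to split $A$ and $B$ along the closure of $A\cap B$, make the leftover parts disjoint, and apply Lemma 2.2 to them.

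The statement for regular rings then follows at once, since $R_R$ is nonsingular and satisfies $(C_3)$ whenever $R$ is von Neumann regular.
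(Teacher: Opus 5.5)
Step 1 of your proposal is correct, but it only yields $A\cong B$ and $C\cong D$. Both ways you propose to finish aim at statements that are false, so this is a genuine gap rather than a technical one.

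\textbf{The direct route fails.} It asks for $A\cap D=0=B\cap C$, i.e.\ $A=A'$ and $B=B'$. Take a field $F$ and $M=F^{(\mathbb N)}\oplus F^{(\mathbb N)}$. This module is semisimple, hence nonsingular with $(C_3)$, and SB for it is just Schr\"oder--Bernstein for dimensions. Let $A=A'=F^{(\mathbb N)}\oplus 0$, $B=0\oplus F^{(\mathbb N)}$, and let $B'\subsetneq B$ be any summand of countably infinite dimension. Lemma 2.2 gives $A\sim_p B'$ and $B\sim_p A'$. Yet $B=B'\oplus(B\cap C)$ forces $B\cap C\neq 0$ for every common complement $C$ of $A$ and $B'$. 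So no argument in Step 3 can show $X=0$.

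\textbf{The fallback also fails.} The claim that nonsingular $(C_3)$ modules are weakly perspective is false. In $F^{(\mathbb N)}$ with basis $\{e_i\}_{i\ge 1}$, the summands $U=\langle e_{2i}\rangle$ and $W=\langle e_{4i}\rangle$ are isomorphic and have isomorphic (countable-dimensional) complements. But $W\subsetneq U$, and distinct nested summands are never perspective: from $M=U\oplus K=W\oplus K$ with $W\subseteq U$ one gets $U=W\oplus(U\cap K)=W$.

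\textbf{What is missing.} You must apply SB not to $A,B$ or to $C,D$, but to what is left after removing the common part, while keeping track of the perspectivity data. Write $C=(B\cap C)\oplus C'$ and $D=(A\cap D)\oplus D'$, so that $M=A\oplus D'=B\oplus C'$. Let $E$ be the closure of $A\cap B$, with $A=E\oplus A''$ and $B=E\oplus B''$. The paper then rewrites
\[
M=E\oplus A''\oplus D'=E\oplus B''\oplus(A\cap D)\oplus D',\qquad M=E\oplus A''\oplus(B\cap C)\oplus C'=E\oplus B''\oplus C'.
\]
This gives $A''\sim_p B''\oplus(A\cap D)$ and $B''\sim_p A''\oplus(B\cap C)$, and SB applied to the summands $A'',B''$ now gives $A''\cong B''$. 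Since $A''\cap B''=0$, Lemma 2.2 yields $A''\sim_p B''$ in $A''\oplus B''$. Then $(C_3)$, applied to $A$ and $B''$, makes $E\oplus A''\oplus B''$ a summand, whence $A\sim_p B$.

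Your fallback sketch (split along the closure of $A\cap B$, apply Lemma 2.2 to the leftovers) has this shape. However, you planned to feed it only $A\cong B$ and $C\cong D$, and from $E\oplus A''\cong E\oplus B''$ you cannot cancel $E$. The pair above shows exactly this failure: taking $A=U$, $B=W$ gives $E=W$ and $B''=0\neq A''$.

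One further point: writing $A=E\oplus A''$ presupposes that $E$ is a summand of both $A$ and $B$. For nonsingular $M$, $E$ is simply $A\cap B$, since summands are closed and the closure operator is monotone. So this step is automatic when summands are closed under intersection, for instance in a regular ring (cf.\ Remark 2.4), but it needs justification in general.
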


\noindent {\bf Proof.} Let A and B be two summands of M such that $A \sim_p B' \subseteq^{\oplus} B$ and $B \sim_p A' \subseteq^{\oplus} A$. Then $M=A \oplus C=B' \oplus C=B \oplus D= A' \oplus D$, for some summands $C$ and $D$ of $M$. Also $A= A' \oplus (A\cap D)$, $B= B' \oplus (B\cap C)$, $D=(A\cap D) \oplus D'$ and $C=(B\cap C) \oplus C'$ for some submodules $C'$ and $D'$. So $M= A \oplus D'=B \oplus C'$ also. As $M$ is nonsingular, $A\cap B$ has a unique essential closure in both $A$ and $B$ and hence in $M$ (see \cite[Proposition 7.44]{L}). Denote this unique closure by $E$ and write $A=E \oplus A''$, $B=E\oplus B''$. Thus, $$ M= E\oplus A''\oplus D'=E\oplus B'' \oplus (A \cap D) \oplus D'= E\oplus A''\oplus (B \cap C) \oplus C'=E\oplus B'' \oplus C'.$$ Therefore,
\begin{equation}
\begin{aligned}
A'' \sim_p B'' \oplus (A \cap D) \qquad \text{and} \qquad B'' \sim_p A'' \oplus (B \cap C).
\end{aligned}
\end{equation}

\noindent Using SB-property of $M$, $A'' \cong B'' \oplus (A \cap D) \cong A'' \oplus (B \cap C) \cong B''$. Clearly, $A'' \cap B'' =(0)$. By Lemma 2.2, $A''\sim_p B''$ in $A'' \oplus B''$ and by $(C_3)$, $A=E\oplus A'' \sim_p E\oplus B''=B$ in $M$. The last line follows from the fact that regular rings are both left and right nonsingular and satisfy $(C_3)$.   \qed

\medskip
\begin{rema}
    The conclusion of Proposition 2.3 (i.e., SB implies PSB) also holds if we take a module $M$ such that its direct summands are closed under pairwise intersection and addition.
\end{rema}

\medskip
 Idempotents $e,f \in R$ are called \textit{right associate} (denoted by $e \sim_{r} f$) if $eR=fR$ (equivalently, $f=eu$ for some unit $u$ in $R$). \textit{Left associate} idempotents are defined symmetrically. Note that idempotents $e,f \in R$ are right associate if and only if $(1-e)$ and $(1-f)$ are left associate. The notion of perspective summands in terms of idempotent chains was first studied by Diesl et al. in \cite{DDP}. It was shown that for any two idempotents $e,f \in R$, $eR \sim_p fR$ in $R_R$ if and only if there exist idempotents $g,h \in R$ such that $e\sim_r g \sim_l h \sim_r f$ (written as $e \sim_{rlr}f$). 

\medskip
The following result is a consequence of \cite[Proposition 2.4]{CP}.

\medskip
\begin{lem}
    If $e$ and $f$ are idempotents in a ring $R$ such that $\bar{e}\sim_{rlr} \bar{f}$ in $R/I$, where $I \subseteq {\rm J}(R)$ is an ideal of $R$, then $e \sim_{rlr} f$. 
\end{lem}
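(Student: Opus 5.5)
Using the idempotent-chain characterization of \cite{DDP}, it suffices to lift a chain $\bar e\sim_r \bar g\sim_l \bar h\sim_r \bar f$ in $\bar R=R/I$ to a chain $e\sim_r g\sim_l h\sim_r f$ in $R$, where $g,h$ are idempotents of $R$. The main tool is the standard fact that, for $I\subseteq {\rm J}(R)$, units lift modulo $I$: if $\bar u$ is a unit of $\bar R$, then any preimage $u$ is a unit of $R$. The other fact needed is that $e\sim_r g$ is equivalent to $ge=e$ and $eg=g$, and dually $g\sim_l h$ is equivalent to $gh=g$ and $hg=h$.

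First consider one right-associate link with a fixed idempotent endpoint in $R$. Suppose $\bar e R=\bar g\bar R$ with $e$ an idempotent of $R$. Then $\bar g=\bar e\bar v$ for a unit $\bar v$, so we may take $g'=e\,v\,(\ldots)$. A cleaner device is $x=e+e r(1-e)$: this is an idempotent with $xR=eR$, and the right associates of $e$ are exactly the idempotents of this form. Concretely, $\bar g=\bar e+\bar e\bar r(1-\bar e)$ for some $\bar r$, so lifting $\bar r$ to $r\in R$ gives the idempotent $g=e+er(1-e)$. It is an honest idempotent of $R$, satisfies $g\sim_r e$, and maps to $\bar g$.

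Similarly, every left associate of $g$ has the form $g+(1-g)sg$. Lifting $\bar s$ therefore gives an idempotent $h\sim_l g$ with image $\bar h$.

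At the last step, $h$ and $f$ are both fixed idempotents of $R$ with $\bar h\sim_r\bar f$, so we cannot freely choose a lift and this is the main obstacle. Since $\bar h\bar R=\bar f\bar R$, the element $u=fh+(1-f)(1-h)$ reduces to $\bar f\bar h+(1-\bar f)(1-\bar h)$. One checks that this element is a unit of $\bar R$: since $\bar f\bar h=\bar h$ and $\bar h\bar f=\bar f$, its inverse is $\bar h\bar f+(1-\bar h)(1-\bar f)$. Hence $u\in{\rm U}(R)$ by lifting of units, and $uh=fh=fu$, so $f=u h u^{-1}$. That is, $f$ is conjugate to $h$ but not necessarily right associate to it.

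The fix is to apply this conjugation argument earlier. Write $h=u^{-1}fu$. Then $hR\cong fR$ via $u$, but the target is a chain ending exactly at $f$. Instead, we rebuild the chain backwards by conjugating everything by $u$: set $e'=ueu^{-1}$, $g'=ugu^{-1}$. Then $g'\sim_l f$, since conjugation preserves associate relations and $ufu^{-1}$... Since this reroutes the problem without closing it, at this point I would simply invoke \cite[Proposition 2.4]{CP}. It states, as I recall, that for $I\subseteq{\rm J}(R)$, idempotents $p,q$ with $\bar p\sim_r\bar q$ satisfy $p\sim_{rl} q$ or a similar short chain. The $\sim_{rlr}$ chain is then obtained from the lifts $g,h$ above, absorbing the extra links through the transitivity of $\sim_r$ and $\sim_l$. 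The delicate point is making the chain length come out to exactly $rlr$. I expect to handle it by lifting from both ends, $g$ from $e$ and $h$ from $f$, and proving $g\sim_l h$ using lifting of units in the middle link, where the relation $gh\equiv g$, $hg\equiv h$ modulo $I$ is corrected by a unit conjugation.
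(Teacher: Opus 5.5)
\textbf{There is a genuine gap, in the middle link of the chain.}

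Your formulas for associates are correct: $e+er(1-e)$ for right associates of $e$, and $g+(1-g)sg$ for left associates of $g$. Lifting $g$ from $e$ and $h$ from $f$ is also exactly how the paper begins. The problem is what you then try to do with the middle link. Your closing plan is to prove $g\sim_l h$ for these two fixed lifts. That is false in general, and a unit conjugation cannot repair it: conjugating $g$ or $h$ destroys $e\sim_r g$ or $h\sim_r f$, which the chain still needs.

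\emph{Counterexample.} Take $R$ to be the upper triangular $2\times 2$ matrices over a field $F$, and $I={\rm J}(R)$. Then $R/I\cong F\times F$ is commutative, so $\sim_r$ and $\sim_l$ reduce to equality there. For $e=E_{11}$ and $f=E_{11}+E_{12}$ we have $\bar e=\bar f$, so the chain in $R/I$ is trivial. Your recipe with $r=s=0$ returns $g=e$ and $h=f$. But $Re=\{aE_{11}\}\neq Rf=\{a(E_{11}+E_{12})\}$.

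\emph{The forward route also fails as stated.} Applying the version of [CP, Proposition 2.4] you recall to the last link gives $h\sim_r h'\sim_l f$. That only yields $e\sim_{rlrl}f$. The chain would close only with the opposite orientation $h\sim_l h'\sim_r f$, which you do not establish.

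\textbf{The missing idea.} The middle link must be lifted through a new idempotent. From $\bar g\sim_l\bar h$ one gets an idempotent $k$ with $g\sim_r k\sim_l h$; this is what the paper takes from [CP, Proposition 2.4]. Then $e\sim_r g\sim_r k$ collapses, giving $e\sim_r k\sim_l h\sim_r f$.

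Your own unit-lifting tool proves this step:
\begin{enumerate}
\item From $\bar g\bar h=\bar g$ and $\bar h\bar g=\bar h$ we get $(\bar g-\bar h)^2=0$. Hence $u=1+g-h$ and $1-g+h$ are units of $R$.
\item Since $u=g+(1-h)$, we have $R=uR\subseteq gR+(1-h)R$.
\item $gR\cap(1-h)R=0$: if $x=gx$ and $hx=0$, then $(1-g+h)x=0$, so $x=0$. Thus $R=gR\oplus(1-h)R$.
\item Writing $1=gu^{-1}+(1-h)u^{-1}$, the element $k=gu^{-1}$ is the idempotent projecting onto $gR$ along $(1-h)R$. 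So $kR=gR$ and $(1-k)R=(1-h)R$, i.e.\ $Rk=Rh$.
\end{enumerate}
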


\noindent {\bf Proof.} Let $\bar{e}\sim_r \bar{g_1} \sim_l \bar{g_2} \sim_r \bar{f}$ be a chain of idempotents in $\bar{R}=R/I$. Then $\bar{g_2}\bar{R}=\bar{f}\bar{R}$ implies that $\bar{g_2}=\overline{f+fr(1-f)}$ for some $r\in R$. We may take $g_2=f+fr(1-f)$. Similarly, $g_1=e+es(1-e)$ for some $s\in R$. As $\bar{g_1} \sim_l \bar{g_2}$, by \cite[Proposition 2.4]{CP}, there exists an idempotent $g\in R$ such that $g_1 \sim_r g \sim_l g_2$. Therefore, $e \sim_r g_1 \sim_r g \sim_l g_2 \sim_r f$ and thus $e \sim_r g \sim_l g_2 \sim_r f$, as desired.  \qed

\bigskip
Recall that a ring $R$ is called \textit{semiregular} if $R/{\rm J}(R)$ is regular and idempotents lift modulo ${\rm J}(R)$. The following result strengthens the last part of Proposition 2.3.
\medskip 
\begin{thm}
  If a semiregular ring $S$ satisfies the SB-property, then both $\bar{S}=S/\rm{J}(S)$ and $S$ satisfy the PSB-property.
\end{thm}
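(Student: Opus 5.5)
The plan has two stages: first transfer the SB-property from $S$ to $\bar S=S/{\rm J}(S)$, then get PSB for $\bar S$ from Proposition 2.3, and finally lift PSB back to $S$ with Lemma 2.5.

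\emph{Step 1: $\bar S$ has SB.} Since $S$ is semiregular, idempotents lift modulo ${\rm J}(S)$. Summands of $\bar S_{\bar S}$ are $\bar e\bar S$ with $e=e^2\in S$. I will use the standard fact that for idempotents $e,f\in S$ we have $eS\cong fS$ iff $\bar e\bar S\cong \bar f\bar S$. Indeed, $\bar e\bar S\cong\bar f\bar S$ means $\bar e=\bar a\bar b$ and $\bar f=\bar b\bar a$ with $\bar a\in\bar e\bar S\bar f$ and $\bar b\in \bar f\bar S\bar e$. Lifting, $a\in eSf$, $b\in fSe$ give $ab\in e+{\rm J}$ inside $eSe$, hence $ab$ is a unit of $eSe$, since ${\rm J}(eSe)=e{\rm J}e$. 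Symmetrically $ba$ is a unit of $fSf$, so $eS\cong fS$. The same argument with lifted idempotents handles ``isomorphic to a summand'': if $\bar e\bar S\cong \bar g\bar S$ with $\bar g\le \bar f$, lift $\bar g$ to an idempotent $g\in fSf$ (idempotents lift in the corner $fSf$, which is again semiregular), so that $gS\subseteq^{\oplus} fS$ and $eS\cong gS$. Then SB in $S$ gives $eS\cong fS$, hence $\bar e\bar S\cong\bar f\bar S$. So $\bar S$ has SB, and since $\bar S$ is regular, the last line of Proposition 2.3 shows that $\bar S$ has PSB.

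\emph{Step 2: lift PSB to $S$.} Let $eS\sim_p B'\subseteq^{\oplus} fS$ and $fS\sim_p A'\subseteq^{\oplus} eS$. Write $B'=f'S$ with $f'\in fSf$ and $A'=e'S$ with $e'\in eSe$. Perspectivity is described by $\sim_{rlr}$ chains of idempotents, and such chains pass to $\bar S$ (images of associates are associates). Hence $\bar e\bar S\sim_p \bar f'\bar S\subseteq^{\oplus}\bar f\bar S$, and symmetrically $\bar f\bar S\sim_p \bar e'\bar S\subseteq^{\oplus}\bar e\bar S$. PSB in $\bar S$ gives $\bar e\sim_{rlr}\bar f$. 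Lemma 2.5 with $I={\rm J}(S)$ then gives $e\sim_{rlr} f$, that is, $eS\sim_p fS$.

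\emph{Main obstacle.} I expect the main difficulty to be Step 1: verifying cleanly that isomorphism of summands and the relation ``isomorphic to a summand of'' both lift from $\bar S$ to $S$. This needs the corner-ring lifting of idempotents and the unit argument in $eSe$.
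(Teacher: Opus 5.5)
Your proof is correct. Step 2 is exactly the paper's final paragraph, but Step 1 takes a different route.

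The paper never shows that $\bar S$ has the SB-property in general. Instead it re-enters the proof of Proposition 2.3 at (2.1), starting from $\bar{A''}\sim_p \bar{B''}\oplus(\bar A\cap\bar D)$ and $\bar{B''}\sim_p\bar{A''}\oplus(\bar B\cap\bar C)$ in $\bar S$. It then proceeds as follows:
\begin{enumerate}
\item It lifts the relevant idempotents to orthogonal pairs $e_1',e_1''$ and $f_1',f_1''$ in $S$.
\item It uses Lemma 2.5 to lift the two perspectivities, and hence the isomorphisms $e_1'S\cong f_1'S\oplus f_1''S$ and $f_1'S\cong e_1'S\oplus e_1''S$, to $S$.
\item It applies SB in $S$ to get $e_1'S\cong f_1'S$, then reduces modulo ${\rm J}(S)$ to obtain $\bar{A''}\cong\bar{B''}$.
\item It finishes with Lemma 2.2 and $(C_3)$, as in Proposition 2.3.
\end{enumerate}

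You instead prove the cleaner, reusable statement that SB descends from a semiregular ring $S$ to $S/{\rm J}(S)$. For this you use two facts: $eS\cong fS$ iff $\bar e\bar S\cong\bar f\bar S$ (via units in $eSe$ modulo ${\rm J}(eSe)=e{\rm J}(S)e$), and idempotents lift in the semiregular corner $fSf$. You then invoke Proposition 2.3 as a black box.

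Your route is more modular, independent of the internals of Proposition 2.3's proof, and gives the extra information that $\bar S$ itself has SB. The paper's route needs only Lemma 2.5, which it requires anyway for the second half. It lifts perspectivity, which is stronger than isomorphism, so it avoids a separate isomorphism-lifting lemma and the semiregularity of corner rings, using the lifting of orthogonal idempotents instead.
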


\noindent {\bf Proof.} We first prove that $\bar{S}$ has the PSB-property. As $\bar{S}$ is regular, from $(2.1)$ we have $\bar{A''}\sim_p \bar{B''}\oplus (\bar{A} \cap \bar{D})$ and $\bar{B''}\sim_p \bar{A''}\oplus (\bar{B} \cap \bar{C})$. So, there exist idempotents $\bar{e}, \bar{f}, \bar{e'}, \bar{f'}, \bar{e''}$ and $\bar{f''}$ in $\bar{S}$ such that $$\bar{e}\bar{S}=\bar{A''}\oplus (\bar{B} \cap \bar{C}), \bar{f}\bar{S}= \bar{B''}\oplus (\bar{A} \cap \bar{D}), \bar{e'}\bar{S}=\bar{A''}, \bar{f'}\bar{S}=\bar{B''}, \bar{e''}\bar{S_1}=\bar{B} \cap \bar{C} \; \text{and} \; \bar{f''}\bar{S}=\bar{A} \cap \bar{D}.$$  We can assume that $\{ \bar{e'}, \bar{e''} \}$ as well as $\{ \bar{f'}, \bar{f''} \} $ are orthogonal sets of idempotents. As idempotents lift modulo ${\rm J}(S)$, suppose that $e_1, f_1, e_1', f_1', e_1''$ and $f_1''$ are idempotents in $S$ which are lifts of $\bar{e}, \bar{f}, \bar{e'}, \bar{f'}, \bar{e''}$ and $\bar{f''}$, respectively, such that both $\{ e_1', e_1'' \}$ and $\{f_1', f_1'' \} $ are orthogonal sets of idempotents. Therefore, $$e_1'S \oplus e_1''S=(e_1' + e_1'')S \subseteq ^{\oplus} S \quad\text{and} \quad f_1'S \oplus f_1''S=(f_1' +f_1'')S \subseteq ^{\oplus} S.$$ For $e_2S=e_1'S \oplus e_1''S$ and $f_2S=f_1'S \oplus f_1''S$, note that $\bar{e}\bar{S}=\bar{e_2}\bar{S}$ and $\bar{f}\bar{S}=\bar{f_2}\bar{S}$. So $\bar{e_1'} \sim_{rlr} \bar{f_2}$ and $\bar{f_1'} \sim_{rlr} \bar{e_2}$. Using Lemma 2.5, we have $e_1' \sim_{rlr} f_2$ and $f_1' \sim_{rlr} e_2$. In particular, $$e_1'S \cong f_1'S \oplus f_1''S \quad \text{and} \quad f_1'S \cong e_1'S \oplus e_1''S.$$ By SB-property of $S$, $e_1'S \cong f_1'S$. Thus $\bar{e_1'}\bar{S} \cong \bar{f_1'}\bar{S}$ i.e., $\bar{A''} \cong \bar{B''}$. Again using Proposition 2.3, $\bar{S}$ has the PSB-property. 

\vspace{0.15cm}
We now show that $S$ has the PSB-property. Suppose $A$ and $B$ are summands of $S$ such that $A \sim_p B' \subseteq^{\oplus} B$ and  $B \sim_p A' \subseteq^{\oplus} A$. In view of \cite[Lemma 6.3]{DDP}, there exist non-zero idempotents $e,f,e',f'$ in $S$ such that $e\sim_{rlr} f'$ and $f\sim_{rlr} e'$. In $\bar{S}$, we get $\bar{e}\sim_{rlr} \bar{f'}$, $\bar{f}\sim_{rlr} \bar{e'}$, $\bar{e'}\bar{S}\subseteq^{\oplus} \bar{e}\bar{S}$ and $\bar{f'}\bar{S}\subseteq^{\oplus} \bar{f}\bar{S}$. By PSB-property of $\bar{S}$, $\bar{e}\sim_{rlr}\bar{f}$. By Lemma 2.5, $e\sim_{rlr}f$ i.e., $A\sim_p B$ in $S$. Therefore, $S$ satisfies the PSB-property.    \qed

\bigskip 
A decomposition $M= \oplus_{i \in I} M_{i}$ is said to 
\textit{complement summands} if for every summand $N$ of $M$ there exists a subset $J \subseteq I$ such that $M= N \oplus  (\oplus_{j \in J} M_{j})$. It is known that in this case, each $M_i$ is indecomposable (\cite[Page 27]{MM}). A module $M$ is said to be a \textit{Harada module} if it has a decomposition $M=\oplus_{i \in I}M_i$ that complement summands and each End($M_i$) is a local ring. For equivalent characterizations of Harada modules, see \cite[Theorem 2.25]{MM}. For $S=$ End($M$), one such characterization is that $M$ is a Harada module iff $S$ is semiregular. In \cite[Theorem 2.20]{DER}, it is proved that such modules satisfy the SB-property. By Theorem 2.6, it also satisfies the PSB-property.

\medskip
\begin{cor}
    Harada modules (in particular, discrete modules\footnote{See \cite{MM} for the definition of discrete and quasi-discrete modules.}) satisfy the PSB-property.
\end{cor}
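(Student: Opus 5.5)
The statement is essentially already assembled in the paragraph preceding it, so the plan is to chain together known results rather than build a new argument. Let $M$ be a Harada module and let $S = {\rm End}(M)$.

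First I would recall the endomorphism-ring characterization of Harada modules cited from \cite[Theorem 2.25]{MM}: $M$ is Harada if and only if $S$ is semiregular. Next I would use \cite[Theorem 2.20]{DER}, which says Harada modules satisfy the SB-property. Then, by Theorem 2.6, $S_S$ has the PSB-property.

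To apply Theorem 2.6 I need the SB-property for $S$ itself, not for $M$. So I would transfer it along the standard correspondence between summands of $M$ and summands of $S_S$: the map $N \mapsto {\rm Hom}(M,N) = eS$, where $e$ is an idempotent projection onto $N$. This correspondence has two properties I would verify.
\begin{enumerate}
\item It preserves isomorphism: $eM \cong fM$ if and only if $eS \cong fS$, since both conditions amount to the existence of $a \in eSf$ and $b \in fSe$ with $ab = e$ and $ba = f$.
\item It preserves direct sum decompositions: $M = eM \oplus (1-e)M$ corresponds to $S = eS \oplus (1-e)S$.
\end{enumerate}
Together these show that $S_S$ has the SB-property whenever $M$ does. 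Theorem 2.6 then gives the PSB-property for $S_S$.

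Finally, I would transfer the PSB-property back to $M$. Two summands $eM$ and $fM$ are perspective in $M$ exactly when they have a common complement $gM$. That is the same as $eS$ and $fS$ having the common complement $gS$. In the language of Section 2, it is equivalent to $e \sim_{rlr} f$ in $S$, and this chain criterion is purely ring-theoretic. Containment as a summand, $B' \subseteq^{\oplus} B$, also corresponds under the bijection. Hence the PSB-property of $S_S$ yields the PSB-property of $M$.

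For discrete modules, I would note that every discrete module is Harada, via the standard decomposition of discrete modules into summands with local endomorphism rings that complement summands \cite{MM}.

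The main obstacle is bookkeeping. I would need to check carefully that summands, perspectivity and the SB condition all pass faithfully between $M$ and $S_S$ in both directions. I expect this to follow from the fact that the lattice of summands of $M$, together with the complement relation, is encoded entirely by the idempotents of $S$.
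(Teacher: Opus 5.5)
Your proposal is correct and follows the paper's route. Harada gives $S={\rm End}(M)$ semiregular and \cite[Theorem 2.20]{DER} gives SB, Theorem 2.6 then gives PSB for $S$, and the property passes back to $M$. The paper gets that last step from the ER-property \cite[Theorem 6(1)]{KK}, while you verify the idempotent correspondence directly; there, the fact to cite is that $M=eM\oplus gM$ iff $S=eS\oplus gS$ for arbitrary idempotents $e,g$, not just $g=1-e$, which is \cite[Lemma 5.1]{GGK}.
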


\medskip

In \cite[Theorem 15]{KK} it was proved that semisimple modules satisfy the PSB-property. The following result extends \cite[Theorem 15]{KK}.

\medskip
\begin{thm}
    Quasi-continuous modules satisfy the PSB-property.
 \end{thm}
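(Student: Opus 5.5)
The plan is to imitate the proof of Proposition 2.3, with injective hulls taking the place of nonsingularity. Quasi-continuity will be used in three ways: closed submodules are summands by $(C_1)$; $(C_3)$ holds; and $M$ is invariant under every idempotent endomorphism of its injective hull $E(M)$. Let $A\sim_p B'\subseteq^{\oplus}B$ and $B\sim_p A'\subseteq^{\oplus}A$. As in the proof of Theorem 2.1 we write
$$M=A\oplus C=B'\oplus C=B\oplus D=A'\oplus D .$$
This gives $A=A'\oplus(A\cap D)$, $B=B'\oplus(B\cap C)$ and $M=A\oplus D'=B\oplus C'$. Consequently
$$A\sim_p B\oplus(A\cap D)\qquad\text{and}\qquad B\sim_p A\oplus(B\cap C).$$

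\emph{Step 1: removing the overlap of $A$ and $B$.} Let $X$ be a closure of $A\cap B$ in $A$. Since $X$ is closed in the summand $A$, it is closed in $M$, hence a summand of $M$ by $(C_1)$, and so of $A$; write $A=X\oplus A_1$. I would like to choose this closure so that it also lies in $B$ and is closed there, as the nonsingular uniqueness argument did in Proposition 2.3. Failing that, I will work with a closure $Y$ of $A\cap B$ in $B$, write $B=Y\oplus B_1$, and use that $X$ and $Y$ are both essential extensions of $A\cap B$. The extension of the identity map on $A\cap B$ should then give $E(X)=E(Y)$ inside $E(M)$, and quasi-continuity should turn this into $X\sim_p Y$ via a common complement. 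In either case $A_1\cap B=0=B_1\cap A$, so $(C_3)$ makes $A_1\oplus B$ and $B_1\oplus A$ summands of $M$.

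\emph{Step 2: isomorphism of the remaining pieces.} Passing to injective hulls, the two displayed perspectivities give
$$E(A)\cong E(B)\oplus E(A\cap D)\qquad\text{and}\qquad E(B)\cong E(A)\oplus E(B\cap C).$$
Bumby's theorem then yields $E(A)\cong E(B)$, and after cancelling the common part coming from $A\cap B$, $E(A_1)\cong E(B_1)$. This cancellation is only heuristic for infinite-dimensional injectives, so here too I would instead apply Bumby to suitably chosen summands of the hulls of $A_1$ and $B_1$. The next point is that in the quasi-continuous summand $N=A_1\oplus B_1$ of $M$, an isomorphism $\varphi\colon E(A_1)\to E(B_1)$ produces a common complement of $A_1$ and $B_1$ in $N$. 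Indeed, $E(N)=E(A_1)\oplus E(B_1)=E(A_1)\oplus\Gamma=\Gamma\oplus E(B_1)$, where $\Gamma=\{x-\varphi(x)\}$ is the graph. The projection onto $\Gamma$ along $E(B_1)$ is an idempotent of $\mathrm{End}(E(N))$. Invariance of $N$ under it gives $N=A_1\oplus(\Gamma\cap N)=(\Gamma\cap N)\oplus B_1$, so $A_1\sim_p B_1$ in $N$. This is the substitute for the use of Lemma 2.2 in Proposition 2.3.

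\emph{Step 3: assembling.} Combine the common complement of $A_1$ and $B_1$ in $N$ with a complement of $N\oplus X$ (when $X=Y$) in $M$, which exists by $(C_3)$, to get a common complement of $A=X\oplus A_1$ and $B=X\oplus B_1$. In the general case, first replace $B$'s part $Y$ by $X$ using $X\sim_p Y$.

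The main obstacle is Step 1 together with the cancellation in Step 2. Without nonsingularity, $A\cap B$ need not have a unique closure lying in both $A$ and $B$, and the relation $E(A)\cong E(B)$ must be split compatibly with the decompositions $A=X\oplus A_1$ and $B=Y\oplus B_1$. My first attempt will be to perform the whole argument inside $E(M)$. Using that every summand of $M$ has the form $F\cap M$ for a summand $F$ of $E(M)$, I would choose the decompositions of $E(A)$ and $E(B)$ first and then intersect them with $M$. Invariance of $M$ under idempotents of $E(M)$ should keep all of these intersections summands of $M$.
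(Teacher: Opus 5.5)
There is a genuine gap, and it sits exactly where you locate the main obstacle: you never establish $E(A_1)\cong E(B_1)$, and neither of your sketched justifications works.

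In Step 1, the claim ``$E(X)=E(Y)$ inside $E(M)$'' is false in general. Since $X$ is a summand, it is closed, so $E(X)\cap M=X$ for any hull of $X$ taken in $E(M)$. Hence $E(X)=E(Y)$ would force $X=Y$. The closures really can differ, and no closure of $A\cap B$ in $A$ need lie in $B$. For example, take the injective module $M=R^2$ over $R=\mathbb{Z}/p^2$, with $A=(1,0)R$ and $B=(1,p)R$. Then $A\cap B=(p,0)R$ is essential in both, so $X=A\neq B=Y$. Moreover, knowing only that $X$ and $Y$ have one common complement would not let you ``replace $Y$ by $X$'' in Step 3.

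In Step 2, cancelling $E(X)\cong E(Y)$ from $E(X)\oplus E(A_1)\cong E(Y)\oplus E(B_1)$ is invalid: for a nonzero injective $E_0$ one has $E_0^{(\mathbb{N})}\oplus E_0\cong E_0^{(\mathbb{N})}\oplus 0$. Saying ``apply Bumby to suitably chosen summands'' or ``work inside $E(M)$'' does not say how to repair this.

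The missing idea is a direct consequence of $(C_3)$. If $X,Y\subseteq^{\oplus}M$ both contain $K=A\cap B$ as an essential submodule, then $X$ and $Y$ have exactly the same complements. Indeed, suppose $M=X\oplus Z$. Then $(Y\cap Z)\cap K\subseteq X\cap Z=0$ forces $Y\cap Z=0$, so $Y\oplus Z\subseteq^{\oplus}M$ by $(C_3)$. Also $Y\oplus Z\supseteq K\oplus Z\le^{e}X\oplus Z=M$, and an essential summand is everything, so $M=Y\oplus Z$.

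With this fact you cancel at the module level \emph{before} passing to hulls, exactly as in (2.1).
\begin{itemize}
\item From $M=X\oplus A_1\oplus D'=Y\oplus B_1\oplus(A\cap D)\oplus D'$ you get $M=X\oplus B_1\oplus(A\cap D)\oplus D'$. So $A_1\sim_p B_1\oplus(A\cap D)$, with common complement $X\oplus D'$.
\item Symmetrically, $B_1\sim_p A_1\oplus(B\cap C)$, with common complement $Y\oplus C'$.
\item Bumby's theorem applied to $E(A_1)$ and $E(B_1)$ now gives $E(A_1)\cong E(B_1)$ directly, with no cancellation.
\item Your graph argument then gives $N=A_1\oplus G=G\oplus B_1$.
\item Write $M=X\oplus N\oplus P$; this is possible since $X\oplus N=A\oplus B_1$ is a summand by $(C_3)$. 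Then $G\oplus P$ complements $A$. It also complements $B$, because $B_1\oplus G\oplus P$ complements $X$ and hence $Y$.
\end{itemize}

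Once repaired, your route is a self-contained module-theoretic proof. It uses only $(C_1)$, $(C_3)$, Bumby's theorem and the invariance of $M$ under idempotents of $E(M)$. The paper argues differently, in $S=\mathrm{End}(M)$. It splits $S/\Delta$ into a regular right self-injective factor (PSB via Proposition 2.3) and a reduced factor (PSB via Theorem 2.1). It then lifts perspectivity modulo $\Delta$ and uses that PSB is an ER-property.
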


\noindent {\bf Proof.} Let $M$ be a quasi-continuous module, $S=$End($M$) and $\bar{S}=S/\Delta$, where $\Delta=\{\alpha \in S \ | \  ker(\alpha) \leq ^{e} M \}$. By \cite[Corollary 3.13]{MM}, there exists a decomposition $\bar{S}=S_1 \times S_2$, where $S_1$ is a regular and right self-injective (in particular, continuous) ring and $S_2$ is reduced (in particular, perspective). As a continuous module satisfies the SB-property \cite[Theorem 3.17]{MM}, by Proposition 2.3, $S_1$ has the PSB-property and by Theorem 2.1, $S_2$ has the PSB-property. So $\bar{S}$ also satisfies the PSB-property (\cite[Theorem 6(4)]{KK}). Now by similar arguments in the last paragraph of Theorem 2.6 and by using \cite[Corollary 4.10]{KP2} in place of Lemma 2.5, $S$ has the PSB-property. As PSB is an ER-property\footnote{As defined by T. Y. Lam, a property of modules is said to be an \textit{ER}-property if a module $M_R$ has the property if and only if $S_S$ has the property, where $S=$End($M$).} \cite[Theorem 6(1)]{KK}, so $M$ also satisfies the PSB-property.   \qed

\medskip
It is known that discrete modules have the exchange property and for a quasi-discrete module, the finite exchange property implies the full exchange property. The following result generalize Corollary 2.7 as follows.

\medskip
\begin{thm}
    Let $M$ be a quasi-discrete module with (finite) exchange property. Then $M$ satisfies the PSB-property.
 \end{thm}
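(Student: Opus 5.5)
The plan is to reduce to the ring $S=\mathrm{End}(M)$ and copy the structure of the proof of Theorem 2.6, using the standard description of quasi-discrete modules with the exchange property. Since PSB is an ER-property (\cite[Theorem 6(1)]{KK}), it suffices to show that $S$ has the PSB-property. The key input I would invoke is the known result from \cite{MM} on quasi-discrete modules: for $M$ quasi-discrete, $S/\nabla$ is von Neumann regular, where
\[
\nabla=\{\alpha\in S \mid \mathrm{Im}(\alpha) \text{ is small in } M\}.
\]
Moreover, when $M$ has the (finite) exchange property, $S$ is an exchange ring, so idempotents lift modulo $\nabla \subseteq {\rm J}(S)$. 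In addition, $S/\nabla$ decomposes as $S_1\times S_2$ with $S_1$ regular and left (dually) self-injective, or at least directly finite/perspective-type, and $S_2$ reduced. This is the dual of \cite[Corollary 3.13]{MM} that was used in Theorem 2.8.

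The first step is to establish the PSB-property for $\bar S=S/\nabla$. If the dual decomposition is available, I would argue exactly as in Theorem 2.8. The reduced factor is abelian regular, so perspectivity is transitive there and Theorem 2.1 applies. For the self-injective regular factor, the SB-property holds (in this setting, quasi-discrete modules with exchange are known to satisfy SB by \cite{DER}), so Proposition 2.3 gives PSB, since regular rings are nonsingular and satisfy $(C_3)$. Then \cite[Theorem 6(4)]{KK} gives PSB for the product. If only regularity of $\bar S$ is available, I would instead prove SB for $\bar S$ directly and again apply Proposition 2.3.

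The second step lifts PSB from $\bar S$ to $S$, as in the last paragraph of Theorem 2.6. Take idempotents $e,f,e',f'$ with $e\sim_{rlr}f'$, $f\sim_{rlr}e'$, $e'S\subseteq^\oplus eS$ and $f'S\subseteq^\oplus fS$ (via \cite[Lemma 6.3]{DDP}). Pass to $\bar S$ and use the PSB-property there to get $\bar e\sim_{rlr}\bar f$. Then lift back with Lemma 2.5, which applies because $\nabla\subseteq{\rm J}(S)$ is an ideal. This gives $eS\sim_p fS$.

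The main obstacle is the first step: obtaining the PSB-property (or SB) for $S/\nabla$. This needs the correct structural fact about endomorphism rings of quasi-discrete modules and the role of the exchange property, namely lifting idempotents and ensuring $\nabla\subseteq {\rm J}(S)$. If no clean decomposition is available, the fallback is to show directly that $\bar S$ has SB by lifting isomorphisms of summands. Regularity of $\bar S$, idempotent lifting and Lemma 2.5 transfer the relations $\bar e'\bar S\cong\bar f\bar S$ up to $S$. The SB-property of $M$ then yields $\bar e\bar S\cong\bar f\bar S$, and the Proposition 2.3 argument finishes the proof.
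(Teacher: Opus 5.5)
There is a genuine gap. Your skeleton matches the paper's: pass to $S$ by the ER-property, split $\bar S=S/\nabla$ into a regular and a reduced factor, apply Theorem 2.1 to the reduced factor, then lift using Lemma 2.5 once $\nabla\subseteq{\rm J}(S)$. But the step you yourself flag as the main obstacle, PSB for the regular factor, is never actually supplied, and the tools you offer for it do not work.

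\begin{itemize}
\item \emph{$S/\nabla$ need not be regular.} Take $M={\mathbb Z}_{p^\infty}$. It is hollow, hence quasi-discrete, and ${\rm End}(M)\cong\widehat{{\mathbb Z}}_p$ is local, so $M$ has the exchange property. Every nonzero endomorphism is surjective, so $\nabla=0$ and $S/\nabla\cong\widehat{{\mathbb Z}}_p$, a local domain that is not a field. Hence your fallback fails: you cannot prove SB for $\bar S$ and apply Proposition 2.3, which needs regularity. Likewise the reduced factor is only reduced, not ``abelian regular''. That part is harmless, since in an abelian ring each summand has a unique complement, so perspectivity is equality and Theorem 2.1 applies.
\item \emph{The regular factor is left unjustified.} Nothing supports the claim that it is (dually) self-injective or ``perspective-type''.
\item \emph{SB for $M$ does not transfer by itself.} Even granting SB for $M$, SB of $M$ or of $S$ says nothing directly about the factor ring $\bar S_1$. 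Transferring it needs an idempotent-lifting argument, and you sketch that only under the false regularity assumption. The SB result the paper actually uses is \cite[Theorem 2.13]{DER}, for \emph{discrete} modules.
\end{itemize}

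The missing idea is to identify the regular factor concretely. The central decomposition $\bar S=\bar S_1\times\bar S_2$ comes from a splitting $M=M_1\oplus M_2$ in which $M_1$ is discrete (see the proof of \cite[Proposition 5.7]{MM}). Then:
\begin{itemize}
\item $S_1={\rm End}(M_1)$ is semiregular with $\nabla_1={\rm J}(S_1)$, and $S_1$ has SB by \cite[Theorem 2.13]{DER}.
\item Theorem 2.6 therefore gives PSB for $\bar S_1=S_1/{\rm J}(S_1)$. Its first paragraph is exactly the SB-transfer across the radical that your proposal leaves open.
\end{itemize}
PSB for $\bar S$ then follows from \cite[Theorem 6(4)]{KK}.

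You should also justify $\nabla\subseteq{\rm J}(S)$ rather than assert it. An idempotent $e$ with $eM$ small satisfies $eM+(1-e)M=M$, which forces $(1-e)M=M$ and so $e=0$. In an exchange ring, an ideal containing no nonzero idempotent lies in the Jacobson radical.
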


\noindent {\bf Proof.} Let $S=$End($M$) and $\bar{S}=S/\nabla$, where $\nabla=\{\alpha \in S \ | \  im(\alpha) << M \}$. Then $\bar{S}=\bar{S_1}\times \bar{S_2}$, where $\bar{S_1}$ is regular and $\bar{S_2}$ is reduced (in particular, perspective). By Theorem 2.1, $\bar{S_2}$ has the PSB-property. Note that $S_1=$ End($M_1$), where $M_1$ is a discrete module (see the proof of \cite[Proposition 5.7]{MM}). So $\nabla_1 = \rm{J}(S_1)$ and $S_1$ satisfy the SB-property (\cite[Theorem 2.13]{DER}). By Theorem 2.6, $\bar{S_1}$ satisfy the PSB-property and so is $\bar{S}$. Since $M$ has the finite exchange property, $S$ is an exchange ring. As $\nabla$ doesnot contain any non-zero idempotent, $\nabla \subseteq \rm{J}(S)$. Again, by the argument in the last paragraph of Theorem 2.6, $S$ satisfies the PSB-property. By \cite[Theorem 6(1)]{KK}, $M$ also satisfies the PSB-property.     \qed

\vspace{0.25cm}
\section{Some Results on Perspectivity}
\vspace{0.25cm}
 
In this section, we prove that for $A,B \subseteq ^{\oplus} M$, if all complements of $A$ are perspective with $B$, then all complements of $B$ are perspective with $A$. We also give some characterizations of weakly perspective modules and modules with transitive perspectivity. These results lead to quick proofs of some known results in the literature.

 \medskip
We will need the following crucial fact to prove Proposition 3.2.
\smallskip
\begin{lem}  \cite[Lemma 3.7]{KMP}
    Let $e,f\in R$ be idempotents such that $e \sim_{lrl} f+fe(1-f)$, then $e \sim_{lrl}f$ and $e \sim_{rlr}f.$
\end{lem}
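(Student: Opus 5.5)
The plan is to translate the idempotent chains into statements about summands and then exploit an explicit unit that carries $f+fe(1-f)$ back to $f$. Put $f'=f+fe(1-f)$ and $x=fe(1-f)$. Since $x^2=0$, $f'$ is an idempotent, and one checks $ff'=f'$ and $f'f=f$. Hence $f'R=fR$, i.e.\ $f\sim_r f'$. The other side of $f'$ changes, however, and it is this change that has to be controlled.

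The key object is the unit $u=1-x$, with $u^{-1}=1+x$. We have $uf=f$ and $u(1-f)=1-f-x=1-f'$. So left multiplication by $u$ is an automorphism of $R_R$ that fixes $fR$ pointwise and sends $(1-f)R$ onto $(1-f')R$. By the annihilator duality between summands of $_RR$ and of $R_R$, we have $e\sim_{lrl}g$ iff $Re\sim_p Rg$ iff $(1-e)R\sim_p(1-g)R$ in $R_R$. The hypothesis therefore reads $(1-e)R\sim_p(1-f')R=u(1-f)R$. Let $K$ be a common complement. Applying $u^{-1}$ gives
\[
u^{-1}(1-e)R\oplus u^{-1}K=R=(1-f)R\oplus u^{-1}K .
\]
To get $(1-e)R\sim_p(1-f)R$, I would next compare $u^{-1}(1-e)R$ with $(1-e)R$. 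We have $u^{-1}(1-e)=(1-e)+fe(1-f)(1-e)$, and the correction term lies in $fR$. I would try to use this to modify $u^{-1}K$ into a module $K'$ that is simultaneously a complement of $(1-e)R$ and of $(1-f)R$. The candidate is the graph-type module $K'=\{k+\varphi(k)\}$, where $\varphi$ is built from $x$ and the projections attached to the decomposition $R=fR\oplus(1-f)R$. The check that $K'$ works should reduce to $x$ killing $fR$ on the right and mapping into $fR$. This gives $e\sim_{lrl}f$.

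For $e\sim_{rlr}f$ I would run the mirror argument. In $_RR$, right multiplication by $u^{-1}$ gives $Rf'u^{-1}=Rf$, since $f'u^{-1}=(f+x)(1+x)=f+x+fx+x^2=f+2x$. This identity is not yet clean, so I would recompute with the correct unit, probably $1+x$ or $1-x$ acting on the appropriate side. The aim is a symmetric identity that lets the same correction trick produce a common complement of $eR$ and $fR$ in $R_R$. An alternative is to combine the $lrl$ chain just obtained with $f\sim_r f'$: concatenating gives a chain $e\sim_l g\sim_r h\sim_l f'\sim_r f$, and the task is to shorten it. For that I would use the orthogonality relations $x f=0$ and $fx=x$ to swap the middle $\sim_l$ and $\sim_r$ steps, in the spirit of the swapping step in Lemma 2.5.

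The main obstacle is the comparison step. Perspectivity is not transitive in general, so $(1-e)R\sim_p u^{-1}(1-e)R$ is not enough by itself. The common complement has to be built by hand, compatibly with both $(1-e)R$ and $(1-f)R$. The special shape $x=fe(1-f)$, which involves $e$ itself, must be what makes this possible. So I would first test the construction in the case $K=(1-f')R$-complement $=f'R$-type choices and in matrix rings, and only then attempt the general graph-correction argument.
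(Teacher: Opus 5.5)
Your reformulation ($e\sim_{lrl}g$ iff $(1-e)R\sim_p(1-g)R$) and your computations with $u=1-x$ are correct. However, the proposal stops exactly where the content of the lemma lies. You never write down a common complement of $(1-e)R$ and $(1-f)R$, and the mechanism you predict for checking one cannot work. The relations $xf=0$ and $fx=x$ hold for every $x\in fR(1-f)$, and every idempotent $f_1$ with $f_1R=fR$ has the form $f+x$ for such an $x$. An argument resting only on these relations would therefore give $e\sim_{lrl}f_1\sim_r f\Rightarrow e\sim_{lrl}f$ (and $e\sim_{rlr}f$) for arbitrary idempotents. That would make perspectivity transitive in every ring, which fails, e.g., in $\mathbb{M}_2(\mathbb{Z})$ by Corollary 3.6. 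The same objection applies to your plan for the second conclusion. Shortening $e\sim_l g\sim_r h\sim_l f'\sim_r f$ to an $rlr$ chain using only $xf=0$ and $fx=x$ is exactly the general reduction that fails. The ``mirror argument'' would need the hypothesis $e\sim_{rlr}f+(1-f)ef$, which you do not have. So the $rlr$ half is not proved at all, and the $lrl$ half lacks its key step.

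The missing ingredient is an identity that uses the specific shape $x=fe(1-f)$. Put $\beta=e+(1-e)f$. It is not a unit in general, but it satisfies $e\beta=e$ and $f\beta=fe+f(1-e)f=f+fe(1-f)=f'$. Let $K$ be a common complement of $(1-e)R$ and $(1-f')R$.
\begin{itemize}
\item $\beta$ is injective on $K$: if $\beta k=0$ then $ek=e\beta k=0$, so $k=0$.
\item $e$ maps $\beta K$ isomorphically onto $eK=eR$, since $e\beta=e$.
\item $f$ maps $\beta K$ isomorphically onto $f'K=fR$, since $f(\beta k)=f'k$.
\end{itemize}
Hence $\beta K$ is a common complement of $(1-e)R$ and $(1-f)R$, which gives $e\sim_{lrl}f$. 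Since $fu^{-1}=f'=f\beta$, the module $\beta K$ is in fact a graph over $u^{-1}K$ with values in $(1-f)R$, so your instinct was sound. But the verification rests on $e\beta=e$, not on properties of $x$ alone.

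For $e\sim_{rlr}f$ one more step is needed. Let $g$ be the idempotent with $gR=K$ and $(1-g)R=(1-e)R$, so $Rg=Re$. Then $g'=\beta g=e+(1-e)fg$ is an idempotent with $(1-g')R=(1-e)R$ and $g'R=\beta K$, so $R=g'R\oplus(1-f)R$. Now take $C=(1-fg)(1-f)R$. It is a complement of $fR$, being a graph over $(1-f)R$. Moreover, $(1-e)(1-fg)=1-g'$ maps $(1-f)R$ isomorphically onto $(1-e)R$. Hence $(1-e)$ maps $C$ isomorphically onto $(1-e)R$, so $C$ is also a complement of $eR$. Thus $eR\sim_p fR$.
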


\smallskip
\begin{prop}
    
 For $A,B \subseteq ^{\oplus} M$, if all complements of $A$ are perspective with $B$, then all complements of $B$ are perspective with $A$. 
\end{prop}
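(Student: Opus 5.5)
Pass to $S=\mathrm{End}(M)$ and work with idempotents. Summands of $M$ correspond to idempotents: write $A=eM$ and $B=fM$ with $e,f\in S$ idempotent. Complements of $A$ are exactly the modules $(1-g)M$ with $g$ ranging over the idempotents satisfying $g\sim_r e$, i.e. $g=e+es(1-e)$, so $(1-g)M=(1-e-es(1-e))M$. Perspectivity of summands of $M$ matches perspectivity of the corresponding principal right ideals of $S$, hence the relation $\sim_{rlr}$ of \cite{DDP}. Since $\sim_{rlr}$ and $\sim_{lrl}$ interchange under $x\mapsto 1-x$ (right associates go to left associates), the hypothesis reads:
\[
1-e-es(1-e)\sim_{rlr} f\ \text{ for all } s\in S,\quad\text{equivalently}\quad e+es(1-e)\sim_{lrl} 1-f \ \text{ for all } s.
\]
The goal is the symmetric statement $f+ft(1-f)\sim_{lrl} 1-e$ for all $t$, i.e. $1-f-ft(1-f)\sim_{rlr} e$.

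The key tool is Lemma 3.1, in the form: $x\sim_{lrl} y+yx(1-y)$ implies $x\sim_{lrl} y$ and $x\sim_{rlr}y$. First I fix $t$ and set $f'=f+ft(1-f)$. Then $f'M$ is another complement-type representative, with $f'S=fS$, so $f'M=B$. The hypothesis depends only on $A$ and $B$, so I may replace $f$ by $f'$ throughout. It therefore suffices to show $1-f\sim_{rlr} e$, equivalently $f\sim_{lrl} 1-e$, using only the hypothesis for all $s$.

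Next I take $x=1-f$ and $y=e'$, where $e'$ is an idempotent right associate to $e$. I choose $s$ so that $e'=e+es(1-e)$ equals $e+e(1-f)(1-e)$, which amounts to taking $s=1-f$. Then $y+yx(1-y)$ has the shape required by Lemma 3.1, with $e$ and $1-f$ in suitable roles. Concretely, the hypothesis instance $e+e(1-f)(1-e)\sim_{lrl}1-f$ is exactly $1-f\sim_{lrl} e+e(1-f)(1-e)$, by symmetry of $\sim_{lrl}$. Lemma 3.1 applied with its $e:=1-f$ and its $f:=e$ then yields $1-f\sim_{rlr} e$. Translating back, $(1-f)S$ and $eS$ have a common complement, i.e. $A$ is perspective with the complement $(1-f)M$ of $B$. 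Because $f$ was an arbitrary idempotent representing $B$ (after the replacement by $f'$), every complement of $B$ is perspective with $A$.

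The main obstacle is the bookkeeping of sides. I must check that "$(1-g)M$ perspective with $fM$" translates into $1-g\sim_{rlr} f$ rather than its left-handed version, and that the orientation matches Lemma 3.1's hypothesis $e\sim_{lrl} f+fe(1-f)$ exactly. If the orientation comes out reversed, I would instead apply the lemma with $s=-(1-f)$, or pass to $1-e$ and $f$, using that $x\mapsto 1-x$ swaps $\sim_{rlr}$ and $\sim_{lrl}$ and that perspectivity is symmetric.
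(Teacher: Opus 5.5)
Your proposal is correct and is essentially the paper's proof. Relabel your $e,f$ as the paper's $1-f,e$: your choice $s=1-f$ then gives exactly the paper's complement of $A$ (its idempotent $g$ with $1-g=(1-f)+(1-f)(1-e)f$), and both arguments finish by applying Lemma 3.1 to $1-f\sim_{lrl} e+e(1-f)(1-e)$ (in your notation), so your orientation worry is unfounded; the only fix is that in the Lemma 3.1 step you should set $y=e$ rather than $y=e'$, since $e'=y+yx(1-y)$ with $x=1-f$, $y=e$.
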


 \noindent{\bf Proof.} Since $eM \oplus fM=M$ is equivalent to $eS \oplus fS=S$ for idempotents $e,f \in S$=End($M$) \cite[Lemma 5.1]{GGK}, we may assume that $A,B \subseteq^{\oplus} S$. Let $Y$ be a complement of $B$ and $e,f$ be idempotents of $S$ such that $eS=B, (1-e)S=Y$ and $(1-f)S=A$. If we define an idempotent $g:= f-(1-f)(1-e)f$, then $(1-g)S=((1-f) + (1-f)(1-e)f)S= (1-f)S=A$. So $gS$ is a complement of $A$ and by assumption $e \sim _{rlr}g$, which is equivalent to say that $(1-e) \sim _{lrl}(1-g)=(1-f)+(1-f)(1-e)f$. By Lemma 3.1, $(1-e) \sim _{rlr}(1-f)$ and $(1-e) \sim _{lrl}(1-f)$. In particular, $Y$ is perspective with $A$.   \qed

\bigskip
Recall that an element $a \in R$ is called \textit{right perspective} if it is regular and every complement of $r_R(a)$ is perspective with $aR$.

 \medskip
\begin{cor}
    If $a\in R$ is right perspective, then it is also left perspective.
\end{cor}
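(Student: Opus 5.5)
The plan is to apply Proposition 3.2 with $M=R_R$, $A=r_R(a)$ and $B=aR$, and then move to the left side using the standard dictionary between right and left summands of $R$ given by annihilators.

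Let $a$ be right perspective. Then $a$ is regular, so $a=aba$ for some $b\in R$. Put $e=ab$ and $f=ba$; both are idempotents. We have $aR=eR$ and $r_R(a)=(1-f)R$, so both $A$ and $B$ are summands of $R_R$. The hypothesis says every complement of $A$ is perspective with $B$. Proposition 3.2 then gives that every complement of $aR$ is perspective with $r_R(a)$.

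Now recall that $a$ is \emph{left perspective} if it is regular and every complement of $l_R(a)$ in ${}_RR$ is perspective with $Ra$. On the left we have $Ra=Rf$ and $l_R(a)=R(1-e)$.

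The key step is to translate a statement about all complements on the right into the corresponding one on the left. I would do this with the idempotent chains of Diesl et al. A complement of $eR$ in $R_R$ has the form $(1-g)R$, where $g$ is an idempotent with $gR=eR$, i.e.\ $g\sim_r e$. Its left counterpart is a complement of $R(1-e)$ in ${}_RR$, which has the form $Rg'$ where $1-g'\sim_l 1-e$. Since $e\sim_r g$ if and only if $1-e\sim_l 1-g$, the idempotents $g$ with $gR=eR$ are exactly those with $R(1-g)=R(1-e)$. Hence the complements of $l_R(a)=R(1-e)$ are exactly the modules $Rg$ with $g\sim_r e$.

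The condition we already have reads: for every $g\sim_r e$, $(1-g)R\sim_p (1-f)R$ in $R_R$, i.e.\ $1-g\sim_{rlr}1-f$. The condition we need reads: for every such $g$, $Rg\sim_p Rf$ in ${}_RR$, i.e.\ $g\sim_{lrl} f$. These are equivalent, because $x\sim_r y$ if and only if $1-x\sim_l 1-y$ (and symmetrically with $l,r$ swapped), so complementing every idempotent in a chain turns an $rlr$-chain from $1-g$ to $1-f$ into an $lrl$-chain from $g$ to $f$. Therefore $a$ is left perspective.

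The main obstacle is keeping the dictionary correct: one must check that the parametrization of complements of $eR$ by the idempotents $g$ with $g\sim_r e$ matches the parametrization of complements of $R(1-e)$ by the same set of $g$, and track the roles of $e$ versus $f$ and of $1-e$ versus $1-f$ throughout. If this becomes messy, an alternative is to invoke \cite[Lemma 5.1]{GGK} together with the left version of the Diesl et al.\ criterion directly.
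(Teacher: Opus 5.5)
Your proposal is correct and follows essentially the paper's route. Like the paper, you apply Proposition 3.2 with $A=r_R(a)$ and $B=aR$ and then pass to the left side. The paper parametrizes complements by reflexive inverses $b$ of $a$, so its $Rb$ plays the role of your $Rg$ with $g\sim_r e$. Your complementing of the $rlr$-chain makes explicit the translation $r_R(b)\sim_p r_R(a)\Rightarrow Rb\sim_p Ra$, which the paper leaves implicit.
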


\noindent {\bf Proof.} If $a$ is right perspective, then in view of Proposition 3.2, every complement of $aR$ is perspective with $r_R(a)$. Since each complement of $aR$ (respectively, $r_R(a)$) corresponds to $r_R(b)$ (respectively, $bR$) for some reflexive inverse $b$ of $a$, $Rb$ is perspective with $Ra$. Hence, every complement of $l_R(a)$ is perspective with $Ra$ and thus $a$ is left perspective also.    \qed

\medskip
In \cite[Theorem 3.4]{GGK} several characterizations of perspective modules were given. In the following two results, we give analogous characterizations of weakly perspective modules and modules with transitive perspectivity.

\begin{thm}
For a module $M_R$  with $S=End_R(M)$, the following conditions are equivalent:

\noindent $(1)$ $M$ is weakly perspective.

\noindent $(2)$ For any split epimorphism $\phi : M \to eM$, where $e^2=e \in S$ and Ker ($\phi$) $\cong (1-e)M$, there exists a splitting $\psi$ such that $M=$ Im($\psi$) $\oplus$ $(1-e)M$.

\noindent $(3)$  For any split epimorphism $\phi : M \to eM$, where $e^2=e \in S$ and Ker ($\phi$) $\cong (1-e)M$, there exists a splitting $\psi$ such that $e\psi$ is an automorphism of $eM$.

\noindent $(4)$ If $euse=e$, for some $e^2=e, s\in S$ and $u\in U(S)$, then $eute=e$ for some $t\in S$ such that $ete \in U(eSe)$.
\end{thm}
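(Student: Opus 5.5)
The plan is to follow the pattern of \cite[Theorem 3.4]{GGK}. I will prove $(1)\Leftrightarrow(2)$ by translating splittings into complements, $(2)\Leftrightarrow(3)$ by a projection argument, and $(3)\Leftrightarrow(4)$ by encoding split epimorphisms with kernel $\cong(1-e)M$ as elements $eu$ with $u\in U(S)$.

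Throughout I use one basic correspondence. If $\phi:M\to eM$ is a split epimorphism with kernel $K$, then splittings $\psi$ of $\phi$ correspond bijectively to complements $N$ of $K$ in $M$, via $N=\mathrm{Im}(\psi)$ and $\psi=(\phi|_N)^{-1}$. Each such $N$ is isomorphic to $eM$.

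\emph{$(1)\Rightarrow(2)$.} Let $K=\ker\phi\cong(1-e)M$, and fix one splitting $\psi_0$. Then $M=\mathrm{Im}(\psi_0)\oplus K=eM\oplus(1-e)M$ with $\mathrm{Im}(\psi_0)\cong eM$. So $K$ and $(1-e)M$ are isomorphic summands with isomorphic complements. By weak perspectivity they have a common complement $N$. By the correspondence, $N=\mathrm{Im}(\psi)$ for a splitting $\psi$, and $M=\mathrm{Im}(\psi)\oplus(1-e)M$.

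\emph{$(2)\Rightarrow(1)$.} Let $M=A\oplus C=B\oplus D$ with $A\cong B$ and $C\cong D$. Choose $e^2=e\in S$ with $eM=D$ and $(1-e)M=B$. Define $\phi:M\to eM$ as the projection onto $C$ along $A$ followed by an isomorphism $C\cong D$. Then $\phi$ is a split epimorphism with $\ker\phi=A\cong B=(1-e)M$. By $(2)$ some splitting $\psi$ gives $M=\mathrm{Im}(\psi)\oplus B$. Since $\mathrm{Im}(\psi)$ is also a complement of $\ker\phi=A$, it is a common complement of $A$ and $B$.

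\emph{$(2)\Leftrightarrow(3)$.} This is the standard fact that, for a summand $N\cong eM$ embedded by $\psi$, we have $M=N\oplus(1-e)M$ if and only if the projection $e$ restricted to $N$ is an isomorphism onto $eM$, that is, $e\psi\in\mathrm{Aut}(eM)$. I will verify both directions with the modular law.

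\emph{$(3)\Leftrightarrow(4)$.} I expect this step to be the main obstacle, namely showing that the hypotheses of $(3)$ and $(4)$ describe exactly the same maps. Given $euse=e$ with $u\in U(S)$, put $\phi=eu:M\to eM$ and $\psi_0=se$. Then $\phi\psi_0=e$, so $\phi$ is split. Its kernel is $u^{-1}(1-e)M\cong(1-e)M$. Splittings of $\phi$ are exactly maps $te$ with $eute=e$, and $e\cdot te=ete$. Hence $(3)$ yields $t$ with $ete\in U(eSe)$.

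Conversely, take $\phi$ as in $(3)$ with one splitting $\psi_0$. Write $M=\mathrm{Im}(\psi_0)\oplus\ker\phi$. Define $u\in S$ to equal $\phi$ on $\mathrm{Im}(\psi_0)$ (an isomorphism onto $eM$) and a fixed isomorphism $\ker\phi\to(1-e)M$ on $\ker\phi$. Then $u$ is an automorphism with $eu=\phi$, and with $s=\psi_0$ we get $euse=e$. Now $(4)$ gives $t$ with $eute=e$ and $ete\in U(eSe)$, so $\psi=te$ is a splitting with $e\psi$ an automorphism of $eM$.

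The remaining care is routine bookkeeping: identifying maps $eM\to M$ with elements of $Se$, and checking that $\ker(eu)=u^{-1}(1-e)M$.
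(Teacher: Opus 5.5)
Your proposal is correct and follows the paper's proof. For $(3)\Leftrightarrow(4)$ the paper also takes $\phi=eu$ with splitting $se$ and kernel $u^{-1}((1-e)M)$, and for the converse builds the automorphism $u=\mu\oplus\phi$ on $\ker\phi\oplus\mathrm{Im}(\psi')$ so that $\phi=eu$. For $(1)\Leftrightarrow(2)\Leftrightarrow(3)$ the paper only says these follow as in \cite[Theorem 3.4]{GGK}; your correspondence between splittings and complements of $\ker\phi$ supplies exactly those omitted details.
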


\noindent{\bf Proof.} The equivalences $(1) \iff (2) \iff (3)$ follow on similar lines as of \cite[Theorem 3.4]{GGK}. We only need to see that $(3) \iff (4)$.

\noindent $(3) \implies (4)$. For a unit $u\in S$, if $euse=e$ then $\phi=eu:M \to eM$ is a split epimorphism (with a splitting $se$). Also Ker($\phi$) $= u^{-1}((1-e)M) \cong (1-e)M$. Using $(3)$, there exists a splitting $\psi$ (= $te$, for some $t \in S$) such that $e\psi$ (= $ete \in eSe$) is an automorphism of $eM$.

\noindent $(4)\implies (3)$. Let $\phi : M \to eM$ be a split epimorphism with Ker($\phi$) $\cong (1-e)M$. Let $\psi'$ denotes a splitting of $\phi$ and $\mu: Ker(\phi)\to (1-e)M$ be an isomorphism. Then $u=\mu \oplus \phi : M = Ker(\phi) \oplus Im(\psi') \to (1-e)M \oplus eM$ is an automorphism of $M$. Note that Ker($\phi$) = Ker($eu$) and $eu=\phi$ on $Im(\psi')$, i.e., $\phi=eu$. The result in $(3)$ now follows by identifying Hom($eM,M$) as $Se$.  \qed

\medskip
\begin{thm}
    For a module $M_R$  with $S=End_R(M)$, the following conditions are equivalent:

\noindent $(1)$ $M$ has perspectivity transitive.

\noindent $(2)$ If $fM \sim_p eM$ in $M$ for idempotents $f$ and $e$ in $S$, then any complement of $fM$ is perspective to any complement of $eM$.

\noindent $(3)$ If $fM \sim_p eM$ in $M$ for idempotents $f$ and $e$ in $S$, then $(1-f)M \sim_p (1-e)M$.

\noindent $(4)$ For any split epimorphism $\phi : M \to eM$, where $e^2=e \in S$ and Ker ($\phi$) $\sim_p C$ for some complement $C$ of $eM$, there exists a splitting $\psi$ such that $M=$ Im($\psi$) $\oplus$ $(1-e)M$.

\noindent $(5)$  For any split epimorphism $\phi : M \to eM$, where $e^2=e \in S$ and Ker ($\phi$) $\sim_p C$ for some complement $C$ of $eM$, there exists a splitting $\psi$ such that $e\psi$ is an automorphism of $eM$.

\noindent $(6)$ If $euse=e$, for some $e^2=e, s\in S$, $u\in U(S)$ and $eu$ is special clean, then $eute=e$ for some $t\in S$ such that $ete \in U(eSe)$.
\end{thm}

\noindent {\bf Proof.} The equivalence $(1) \iff (2)$ follows from \cite[Lemma 9]{H}. $(2) \implies (3)$ is tautology. Also $(4) \iff (5)$ is quite clear. For $(5) \iff (6)$, if Ker ($\phi$) $\sim_p C$ for some complement $C$ of $eM$, then in particular, Ker ($\phi$) $\cong (1-e)M$. By Theorem 3.4, there exists a unit $u \in S$ such that $\phi=eu$. Also, $M=eM \oplus C=C \oplus X= X \oplus Ker (\phi)$ implies that Im($\phi$)$(=eM)$ and Ker ($\phi$) are perspective in complements, which is equivalent to say that $\phi = eu$ is special clean.  

\noindent $(3) \implies(4)$. Suppose $\phi: M \to eM$ is a split epimorphism with a splitting $\varphi$ and Ker ($\phi$)$\sim_p C$ for some complement $C$ of $eM$. So, $M=\varphi (M) \oplus Ker(\phi)$. Invoking $(3)$ twice, we get $\varphi(M) \sim_p eM$ and Ker($\phi$)$\sim_p (1-e)M$. Write $M=Ker(\phi) \oplus X= (1-e)M \oplus X$, for some submodule $X$ of $M$. Let $\pi:M \to X$ denotes the natural projection with kernel $(1-e)M$. Then $\pi \vert_{eM}:eM \to X$ is an isomorphism. Since Ker($\phi$)$\cap X=(0)$, $\phi \pi \vert_{eM}$ is an automorphism of $eM$. Then $\psi := \pi(\phi \pi \vert_{eM})^{-1}$ is the required splitting of $\phi$ such that Im($\psi$)$=X$ and therefore $M= Im(\psi) \oplus (1-e)M$. 

\noindent $(4) \implies (2)$. For any complements $A_1$ and $B_1$ of $fM$ and $eM$ respectively, there exist idempotents $f_1$ and $e_1$ in $S$ such that $fM=f_1M, A_1=(1-f_1)M, eM=e_1M$ and $B_1=(1-e_1)M$. As $fM \sim_p eM$, $M=f_1M \oplus C= e_1M \oplus C$ for some submodule $C$ of $M$. For the natural projection $\pi: M \to e_1M$ with kernel $C$, $\pi \vert_{f_1M}:f_1M \to e_1M$ is an isomorphism. Take $\phi:=\pi \vert_{f_1M}f_1 : M \to e_1M$. Clearly, $\phi$ is a split epimorphism (with a splitting $(\pi \vert_{f_1M})^{-1}$) and Ker($\phi$) $=(1-f_1)M \sim_p C$ (with a common complement $f_1M$). By $(4)$, There exists a splitting $\psi :e_1M \to M$ such that $M=Im(\psi) \oplus (1-e_1)M$. As $M=Im(\psi) \oplus Ker(\phi)= Im(\psi) \oplus (1-f_1)M$, we thus have $(1-e_1)M \sim_p (1-f_1)M$ i.e., $A_1 \sim_p B_1$.  \qed

\medskip
While it is already known (\cite[Proposition 4]{AAM} and \cite[Corollary 2.2]{KP1}) that weakly perspectivity and the transitivity of perspectivity are ER-properties, the same can also be observed from Theorems 3.4 and 3.5. As an application of Theorem 3.5, we give a quick proof of \cite[Theorem 2.5]{KP1} in the following result. 

\medskip
\begin{cor}\cite[Theorem 2.5]{KP1}
     If perspectivity is transitive in $\,{\mathbb M}_2\bigl(R\bigr)\,$, then $R$ has stable range one.
 \end{cor}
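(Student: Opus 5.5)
We will use the equivalence $(1)\iff(5)$ of Theorem 3.5, applied to the free module $M=R^2_R$ (columns) with $S={\rm End}(M)={\mathbb M}_2(R)$. Since transitivity of perspectivity is an ER-property (as noted after Theorem 3.5), the hypothesis on ${\mathbb M}_2(R)$ means that perspectivity is transitive in $M$. Recall that $R$ has stable range one if, whenever $ax+b=1$ in $R$, there is $y\in R$ with $a+by\in {\rm U}(R)$. So fix $a,x,b\in R$ with $ax+b=1$.

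Let $e=\mathrm{diag}(1,0)$, so $eM=\{(r,0)^T\}\cong R$. Define $\phi:M\to eM$ by $\phi(r,s)^T=(ar+bs,0)^T$. Then $\psi_0(t,0)^T=(xt,t)^T$ is a splitting of $\phi$, since $axt+bt=t$; hence $\phi$ is a split epimorphism and $M={\rm Ker}(\phi)\oplus X$ with $X={\rm Im}(\psi_0)=\{(xt,t)^T\}$.

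Next we need a complement $C$ of $eM$ with ${\rm Ker}(\phi)\sim_p C$. Every complement of $eM$ is a graph $\{(gs,s)^T\}$, and we choose $g=x-1$. To see that $X\oplus C=M$, solve $(xt+(x-1)s,\;t+s)=(p,q)$. The second coordinate gives $s=q-t$, and then the first gives $t=p-(x-1)q$. The solution is unique, so $X\cap C=0$ and $X+C=M$. Thus $X$ is a common complement of ${\rm Ker}(\phi)$ and $C$, and ${\rm Ker}(\phi)\sim_p C$ as required in condition (5).

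By Theorem 3.5(5), there is a splitting $\psi$ of $\phi$ such that $e\psi$ is an automorphism of $eM$. Write $\psi(t,0)^T=(ct,dt)^T$ with $c,d\in R$. The splitting condition gives $ac+bd=1$. Under the identification $eM\cong R$, the map $e\psi$ is left multiplication by $c$. Left multiplication by $c$ is an automorphism of $R_R$ precisely when $c\in{\rm U}(R)$, since it commutes with the right action and is bijective. Then
$$a+b(dc^{-1})=(ac+bd)c^{-1}=c^{-1}\in{\rm U}(R),$$
so $y=dc^{-1}$ works and $R$ has stable range one.

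The only delicate step is the choice of the complement $C$ of $eM$ that is perspective to ${\rm Ker}(\phi)$; taking $g=x-1$ makes the common-complement check a direct computation. The remaining care is bookkeeping: keeping the left/right conventions for $R^2$ consistent, so that endomorphisms of $eM$ correspond to left multiplications and $eSe\cong R$.
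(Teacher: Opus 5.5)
Your proof is correct and is essentially the paper's argument: your $\phi$ and $\psi_0$ are the paper's $EU$ and $SE$ (your $b$ plays the role of the paper's $by$). Your final step $a+b(dc^{-1})=c^{-1}$ is the paper's computation with $c=t_1$, $d=t_3$. The only difference is that you invoke Theorem 3.5(5) and check ${\rm Ker}(\phi)\sim_p C$ directly via the common complement $X$, whereas the paper invokes the equivalent condition (6), which requires exhibiting the invertible matrix $U$ and an explicit special clean decomposition of $EU$.
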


\noindent {\bf Proof.} Let $ax+by=1$ for some $a,b,x,y \in R$. Write $E$ = $\left(\begin{array}{cc} 1 & 0 \\ 0 & 0 \end{array}\right)$, $U$ = $\left(\begin{array}{cc} a & by \\ -1 & x \end{array}\right)$ and $S$ = $\left(\begin{array}{cc} x & 0 \\ 1 & 0 \end{array}\right)$. It is clear that $EUSE=E$. Also, $$EU=\left(\begin{array}{cc} a & 1-ax \\ 0 & 0 \end{array}\right)= \left(\begin{array}{cc} 1+x & -x(1+x) \\ 1 & -x \end{array}\right) + \left(\begin{array}{cc} a-1-x & 1-ax+x+x^2 \\ -1 & x \end{array}\right) $$ is a special clean decomposition of $EU$. By Theorem 3.5(6), there exists a $T=\left(\begin{array}{cc} t_1 & t_2 \\ t_3 & t_4 \end{array}\right)\in M_2(R)$ such that $t_1$ is a unit in $R$ and $EUTE=E$ . Thus, $at_1+byt_3=1$ and hence $a+b(yt_3t_1^{-1})$ is a unit in $R$.   \qed

\vspace{0.25cm}
\section{Transitivity of perspectivity and special clean elements}
\vspace{0.25cm}

The element-wise characterization of various classes of rings remains a compelling area of study. For instance, a ring $R$ is an internal cancellation (IC) ring iff every regular element is unit-regular (see \cite{KL} for a detailed study of IC rings). In \cite[Theorem 3.1]{M}, it was proved that a ring is perspective iff every regular element is perspective. Along similar lines of that proof, it can be easily observed that a ring is weakly perspective iff every unit-regular element is perspective. In this direction, we prove the following result:

\medskip
\begin{thm}
    Perspectivity is transitive in a ring $R$ iff every special clean element of $R$ is perspective.
\end{thm}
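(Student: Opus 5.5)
We argue with the ring-theoretic dictionary from the Introduction. An element $a\in R$ is \emph{perspective} when it is regular and $aR\sim_p bR$ holds for every complement $bR$ of $r_R(a)$; equivalently, by Corollary 3.3, every complement of $r_R(a)$ is perspective with $aR$. By \cite[Theorem 4.1]{KP2}, $a$ is special clean iff $a$ is regular and some complement of $r_R(a)$ is perspective with $aR$. Equivalently, some complement of $aR$ is perspective with $r_R(a)$.

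($\Rightarrow$) Suppose perspectivity is transitive and $a$ is special clean.
1. Choose a complement $X_0$ of $r_R(a)$ with $X_0\sim_p aR$.
2. Let $X$ be any complement of $r_R(a)$. Then $X$ and $X_0$ have the common complement $r_R(a)$, so $X\sim_p X_0$.
3. Transitivity gives $X\sim_p X_0\sim_p aR$, hence $X\sim_p aR$.

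So every complement of $r_R(a)$ is perspective with $aR$, i.e. $a$ is perspective.

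($\Leftarrow$) By Theorem 3.5 it is enough to verify condition $(3)$: if $eR\sim_p fR$ for idempotents $e,f$, then $(1-e)R\sim_p(1-f)R$. We construct a special clean element whose image is $(1-f)R$ and whose kernel is perspective with $(1-e)R$.
1. Write $R=eR\oplus C=fR\oplus C$.
2. Let $\pi:R\to fR$ be the projection with kernel $C$. Then $\pi|_{eR}:eR\to fR$ is an isomorphism, realized by left multiplication by some $b\in fRe$ with a reflexive inverse $c\in eRf$, $cb=e$, $bc=f$.
3. Rather than use $b$ directly, pick complements more cleverly. Let $a$ be an element with $aR=(1-e)R$ and $r_R(a)$ a complement of something perspective with $(1-f)R$. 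Concretely, use the split-epimorphism language of Theorem 3.5(4). Define $\phi:R\to(1-e)R$ as a map with kernel $fR$, available since $(1-f)R\cong(1-e)R$ as common complements' quotients, $R/fR\cong C\cong R/eR$.
4. Let $a\in R$ represent $\phi$. Then $aR=(1-e)R$ and $r_R(a)=fR$.
5. The complement $C$ of $r_R(a)=fR$ is also a complement of $eR$. Hence $aR=(1-e)R$ and $C$ have the common complement $eR$, so some complement of $r_R(a)$ is perspective with $aR$, and $a$ is special clean.
6. By hypothesis $a$ is perspective, so every complement of $r_R(a)=fR$ is perspective with $aR=(1-e)R$. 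In particular $(1-f)R\sim_p(1-e)R$.

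This gives $(3)$, and Theorem 3.5 yields transitivity.

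The main obstacle is step 3 of ($\Leftarrow$): producing a regular $a$ with $r_R(a)=fR$ and $aR=(1-e)R$ exactly. We get it from the isomorphism $(1-f)R\cong C\cong(1-e)R$, where the first comes from $R=fR\oplus(1-f)R=fR\oplus C$ and the second from $R=eR\oplus C=eR\oplus(1-e)R$. Composing these with the projection $1-f$ gives $a=\theta(1-f)$, where $\theta:(1-f)R\to(1-e)R$ is that isomorphism, realized as left multiplication by an element of $(1-e)R(1-f)$. The remaining checks are routine:
1. $a$ is regular, with inverse $\theta^{-1}$ extended by $0$ on $eR$.
2. $r_R(a)=fR$.
3. $aR=(1-e)R$.
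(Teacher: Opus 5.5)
Your proof is correct and is essentially the paper's argument. The forward direction is the same transitivity step, phrased with complements of $r_R(a)$ rather than of $aR$. For the converse you use the device of the paper's Proof 1: an element $a$ with $aR=(1-e)R$ and $r_R(a)=fR$, which is special clean because the common complement $C$ of $eR$ and $fR$ witnesses it. You then feed the conclusion into Theorem 3.5(3), much as Proof 3 feeds into Theorem 3.5(4), instead of treating a chain $eR\sim_p fR\sim_p gR$ directly as Proof 1 does. The unused element $b$ in step 2 and the vague first half of step 3 can simply be deleted.
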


\noindent{\bf Proof 1.} Suppose that perspectivity is transitive in a ring $R$ and $a \in R$ be a special clean element. Then some complement (say, $X$) of $aR$ is perspective with $r_R(a)$. Since any complement $X'$ of $aR$ is perspective with $X$, by transitivity, $X'$ is perspective with $r_R(a)$ and hence $a$ is a perspective element. \\
Conversely, suppose that every special clean element is perspective. Let $eR \sim_p fR \sim_p gR$ for some idempotents $e, f$ and $g$ in $R$. Then $$R=eR\oplus X=fR\oplus X=fR\oplus Y=gR\oplus Y$$ for some $X,Y \subseteq^{\oplus}R_R$. Let $a:Y \to X$ be an isomorphism with kernel $gR$. Then $a$ is an element of $R$ such that $aR=X$ and $r_R(a)=gR$. So there is a complement (i.e., $fR$) of $aR$ which is perspective with $gR=r_R(a)$. Therefore, $a$ is a special clean and hence, perspective element of $R$. This implies $eR \sim_p gR$.    \qed

\bigskip
\noindent{\bf Proof 2.} Suppose that perspectivity is transitive in a ring $R$ and $a \in R$ be a special clean element. By \cite[Theorem 3.4]{KP2}, any pair of idempotents $e,f \in R$ such that $aR=eR$ and $r_R(a)=(1-f)R$, are connected by a right 4-chain i.e., $e \sim_{rlrl}f$. By transitivity, $e\sim_{rlr}f$, which means that $aR$ is perspective to any complement of $r_R(a)$ (i.e., $a$ is a perspective element). \\
Conversely, suppose that every special clean element is perspective. For transitivity of perspectivity, it suffices to show that $e\sim_{rlrl}f$ implies $e\sim_{rlr}f$ (for the case $e\sim_{lrlr}f$, we obtain $e\sim_re\sim_{lrl}g\sim_rf$ and hence $e\sim_{rlr}f$. Transitivity then follows by simultaneously reducing the chain). Let $a:fR \to eR$ be an isomorphism with kernel $(1-f)R$. Then $e\sim_{rlr}g\sim_lf$ implies that $aR=eR$ is perspective with $gR$ (which is a complement of $(1-g)R=(1-f)R=r_R(a)$). So $a$ is a special clean and hence perspective element of $R$, which further implies that $e \sim_{rlr} f$.  \qed

\bigskip
\noindent{\bf Proof 3.}  Let $a\in R$ be a special clean element. Then some complement (say, $X$) of $aR$ is perspective with $r_R(a)$. By \cite[Lemma 9]{H} (or see Theorem 3.5(2)), any complement of $X$ is perspective to any complement if $r_R(a)$. In particular, $aR$ is perspective to any complement of $r_R(a)$ i.e., $a$ is a perspective element. \\
Conversely, suppose that every special clean element is perspective and let $\phi:R \to eR$ be a split epimorphism with Ker($\phi$) $\sim_pC$ for some complement $C$ of $eR$. Then $\phi=eu$ is a special clean element of $R$ (see the proof of Theorem 3.5($(5) \iff (6)$)). By assumption, $eu$ is perspective and hence $r_R(eu)=$ Ker($\phi$) is perspective with $(1-e)R$. The conclusion of Theorem 3.5(4) now follows along the same lines as in $(3)\implies (4)$ of the same proof. Therefore, perspectivity is transitive in $R$.  \qed 

\vspace{0.25cm}

\vspace{0.25cm}
\section{Acknowledgements}
\vspace{0.25cm}
We are grateful to Xavier Mary for reading the paper and providing valuable comments that have improved the quality of the paper.

\vspace{.50cm}
\bigskip
\noindent Department of Mathematics \\
\noindent Panjab University \\
\noindent Chandigarh-160014, India

\medskip
\noindent {\tt dkhurana@pu.ac.in}

\smallskip
\noindent {\tt shbm66.sm@gmail.com}

\end{document}